\newcommand{\ee}{\textrm{e}}
\newcommand{\ii}{\textrm{i}}
\newcommand{\dd}{\textrm{d}}
\newcommand{\z}{\zeta}
\newcommand{\om}{\omega}
 \newtheorem{lemma}{Lemma}[section]
 \newtheorem{theorem}[lemma]{Theorem}
 \newtheorem{definition}[lemma]{Definition}
 \newtheorem{remark}[lemma]{Remark}
\numberwithin{equation}{section}
\begin{document}

\title{Large degree asymptotics of orthogonal polynomials with respect to an oscillatory weight on a bounded interval}
\author{Alfredo Dea\~{n}o\\[3mm]
Department of Computer Science, KU Leuven\\
Celestijnenlaan 200A, 3001 Heverlee, Belgium\\[3mm]
Departamento de Matem\'aticas, Universidad Carlos III de Madrid\\
Avda. de la universidad, 30. 28911 Legan\'es, Madrid, Spain\\[3mm]
alfredo.deano@cs.kuleuven.be\\
alfredo.deanho@uc3m.es}
\maketitle

\begin{abstract}
We consider polynomials $p_n^{\om}(x)$ that are orthogonal with respect to the oscillatory weight $w(x)=\ee^{\ii\omega x}$ on $[-1,1]$, where $\omega>0$ is a real parameter. A first analysis of $p_n^{\om}(x)$ for large values of $\omega$ was carried out in \cite{ADHW}, in connection with complex Gaussian quadrature rules with uniform good properties in $\omega$. In this contribution we study the existence, asymptotic behavior and asymptotic distribution of the roots of $p_n^{\om}(x)$ in the complex plane as $n\to\infty$. The parameter $\omega$  grows with $n$ linearly. The tools used are logarithmic potential theory and the $S$-property, together with the Riemann--Hilbert formulation and the Deift--Zhou steepest descent method. 
\end{abstract}

%\tableofcontents

\section{Introduction}
In this paper we are concerned with a family of polynomials orthogonal on the interval $[-1,1]$ with respect to an oscillatory weight function. More precisely, we consider the weight function 
\begin{equation}\label{wx}
w(x)=\ee^{\ii\omega x},  
\end{equation}
on $[-1,1]$, where $\omega>0$ is a real parameter, and we define formal orthogonal polynomials, depending on two parameters $\omega$ and $n$, in the following sense:
\begin{equation}\label{OPs}
 \int_{-1}^1 p^{\om}_n(x)x^k w(x)\dd x=0, \qquad k=0, 1,\ldots, n-1.
\end{equation}

The purpose of this paper is to investigate the large $n$ asymptotic behavior of $p^{\om}_n(x)$. We assume that the parameter $\omega$ is coupled linearly with $n$, i.e. $\omega=\omega_n=\lambda n$ for some $\lambda\geq 0$.

Polynomials orthogonal with respect to complex weight functions have been considered in the literature in connection with rational approximation of analytic functions, see for example the work of Aptekarev \cite{Apt}. In that reference, the author considers more general complex weight functions, holomorphic in a neighborhood of the curve where the orthogonality is defined. The weight \eqref{wx} is a particular case of that analysis, although its simplicity allows for more explicit results. It is also important to mention that polynomials with respect to a complex exponential weight have also been considered recently in the work of Suetin \cite{Sue}. In this reference the author studies (using different techniques) the case $\omega=1$ in our notation, modified with a Chebyshev factor $(1-x^2)^{-1/2}$. 

The analysis of this family of orthogonal polynomials was motivated in \cite{ADHW} by the problem of constructing complex quadrature rules of Gaussian type for oscillatory integrals of the type
\begin{equation}\label{oscint}
I[f]= \int_{-1}^1 f(x)\ee^{\ii\omega x}\dd x,
\end{equation}
which is a numerical challenge when the frequency $\omega$ is large. For this and more general Fourier--type integrals, several possibilities have been contemplated in the literature: one option is an application of a Filon--type rule, as exposed for example in \cite{IN}, which is based on interpolation of $f(x)$ and its derivatives (or approximations) at the endpoints. Another possibility is the application of the classical method of steepest descent, see for instance \cite{AH2, HV}, which leads to complex quadrature rules which are optimal for large $\omega$, and whose convergence properties can be improved used interpolation at carefully selected points, see \cite{HO}. In this case, this approach would lead to deformation of the path of integration into the upper complex plane and application of Gauss--Laguerre quadrature to the resulting contour integrals after a suitable parametrization. Another possibility is the so--called exponentially fitted rules, see \cite{LVD}, which are restricted to be real and 
not directly 
connected to orthogonal polynomials.

The main purpose of analyzing the family of orthogonal polynomials $p_n^{\om}(x)$ in \eqref{OPs} is that a complex quadrature rule based on them would have good asymptotic properties both for large $\omega$, resembling the performance of the steepest descent method in that respect, and also for small $\omega$, meaning that it reduces to the classical Gauss--Legendre rule when $\omega\to 0$.

In \cite{ADHW} the authors consider several properties of $p^{\om}_n(x)$, in particular the large $\omega$ asymptotic behavior. The fact that the weight function $w(x)$ is not positive poses a problem of existence of $p_n^{\om}(x)$ from the outset, since the standard Gram--Schmidt procedure to generate the family of orthogonal polynomials may fail for some values of $n$ and/or $\omega$. In fact, one of the conclusions in \cite{ADHW} is that given $n\geq 1$, there exists a countable set of values of $\omega$ for which the polynomial of odd degree $p^{\om}_{2n+1}(x)$ is not defined. This corresponds to zeros of the bilinear form
\begin{equation}
(f,g)=\int_{-1}^1 f(x)g(x)w(x)\dd x
\end{equation}
applied to the polynomial $p^{\om}_{2n}(x)$. For instance, in the case of $p^{\om}_1(x)$, that sequence is given by $\omega=k\pi$, with $k\in\mathbb{Z}$. In contrast, the polynomials of even degree $p^{\om}_{2n}(x)$ are conjectured to exist for all $n$, and under this assumption, their roots tend to the roots of the product of two rotated and scaled Laguerre polynomials as $\omega\to\infty$. 

In order to state asymptotic results for large $n$, and we rewrite the weight as varying with $n$:
\begin{equation}\label{wn}
 w_n(x)=\ee^{-nV(x)}, \qquad V(x)=-\frac{\ii \omega_n x}{n}=-\ii\lambda x. %\qquad \lambda_n=\frac{\omega}{n},
\end{equation}

% We highlight two important situations:
% \begin{itemize}
%  \item When $\omega$ is fixed, then we have $\lambda=0$ and $\varepsilon_n=\omega$ is independent of $n$.
%  \item When $\omega=\lambda n$, with $\lambda>0$, then $\lambda_n=\lambda$ and $\varepsilon_n\equiv 0$.
% \end{itemize}

The asymptotic behavior of the roots of $p_n^{\om}(x)$ as $n\to\infty$ will be obtained using logarithmic potential theory and the notion of $S$-curve in a polynomial external field, that goes back to the works of H. Stahl \cite{Stahl}, see also the recent work of Rakhmanov \cite{Rakh}. These ideas have been used recently in connection with non--Hermitian orthogonality in the complex plane, see for instance \cite{KS} for a general formulation of the theory with more general exponential weight functions, see \cite{AMAM,DHK,HKL} for an analysis in the case when $V(x)$ is a cubic polynomial, or the recent contribution \cite{AMFMGT} in connection with Laguerre polynomials with arbitrary complex parameters.

The results on the large $n$ asymptotic behavior of $p_n^{\om}(x)$ will be obtained using the Riemann--Hilbert formulation and the nonlinear steepest descent method. This approach also gives asymptotic information about the coefficients $a^2_n=a^2_{n,\omega}$ and $b_n=b_{n,\omega}$ of the three term recurrence relation
\begin{equation}\label{TTRR}
 xp_n^{\om}(x)=p_{n+1}^{\om}(x)+b_n p_n^{\om}(x)+a^2_n p_{n-1}^{\om}(x),
\end{equation}
which holds provided that $p_n^{\om}(x)$ and $p_{n\pm 1}^{\om}(x)$ are well defined. Here we have omitted the possible dependence of $\omega$ on $n$, of course in this situation the parameter $\omega_n$ would be shifted as well. We also remark that in \cite[Theorem 3.3]{ADHW}, the deformation equations for these coefficients in terms of $\omega$ are given.

Figures \ref{fig_l05}, \ref{fig_l1} and \ref{fig_l15} show the zeros of the polynomials $p_n^{\om}(z)$ in the complex plane, computed with {\sc Maple} using extended precision of 50 digits, and taking $n=20$, $n=40$ and then $\lambda_n=\lambda=1/2$ in Figure \ref{fig_l05}, $\lambda_n=\lambda=1$ in Figure \ref{fig_l1} and $\lambda_n=\lambda=3/2$ in Figure \ref{fig_l15}. These numerical experiments indicate that one should expect a transition in the behavior of the zeros of $p_n^{\om}(x)$, from being supported on a single curve joining $z=-1$ and $z=1$ to distributing along two disjoint arcs in the complex plane, as the coupling parameter $\lambda$ goes through a critical value, say $\lambda_0$, that seems to be located between $1$ and $3/2$. In the next section we make this statement precise.

\begin{figure}
\centerline{
\includegraphics[width=55mm,height=55mm]{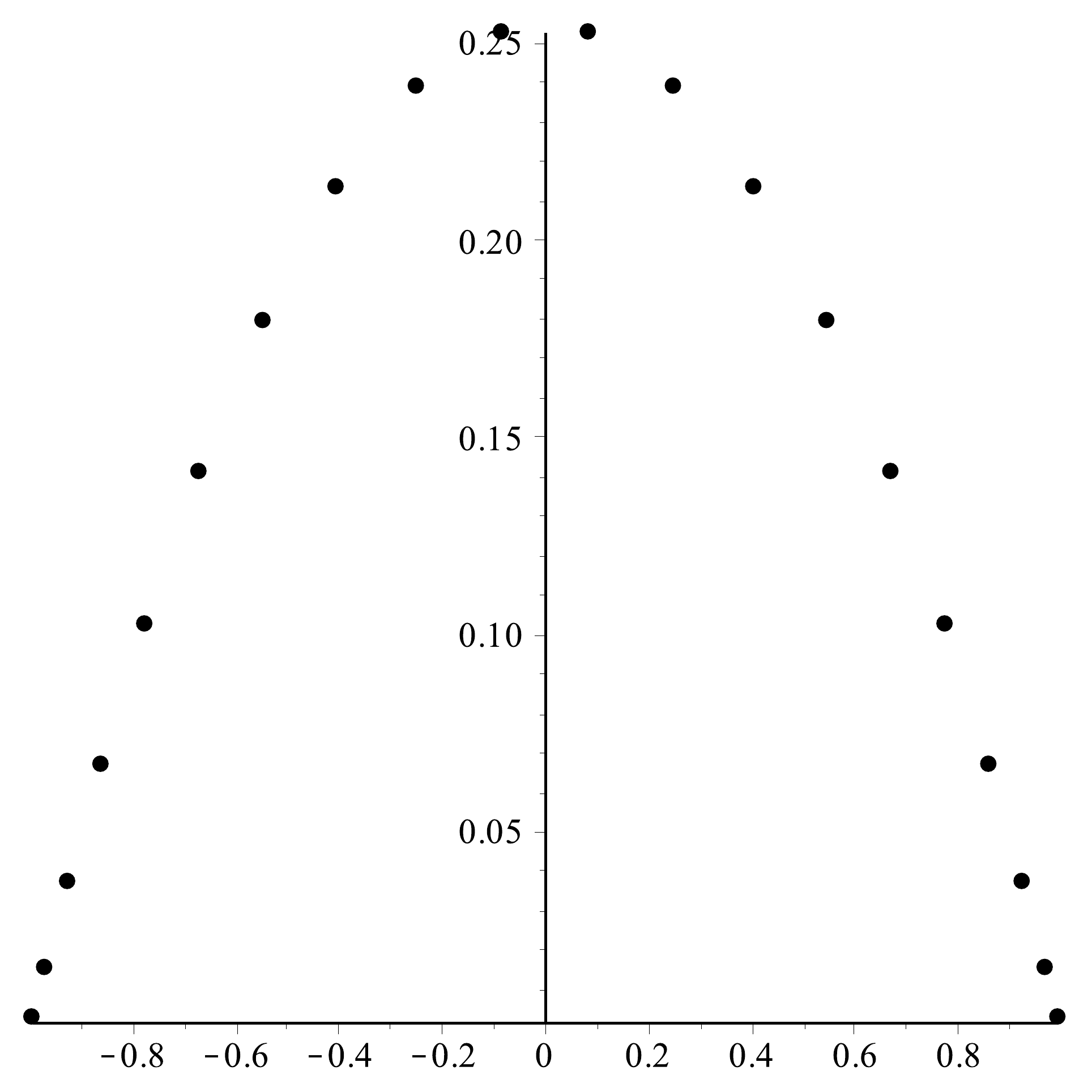}
\hspace{3mm}
\includegraphics[width=55mm,height=55mm]{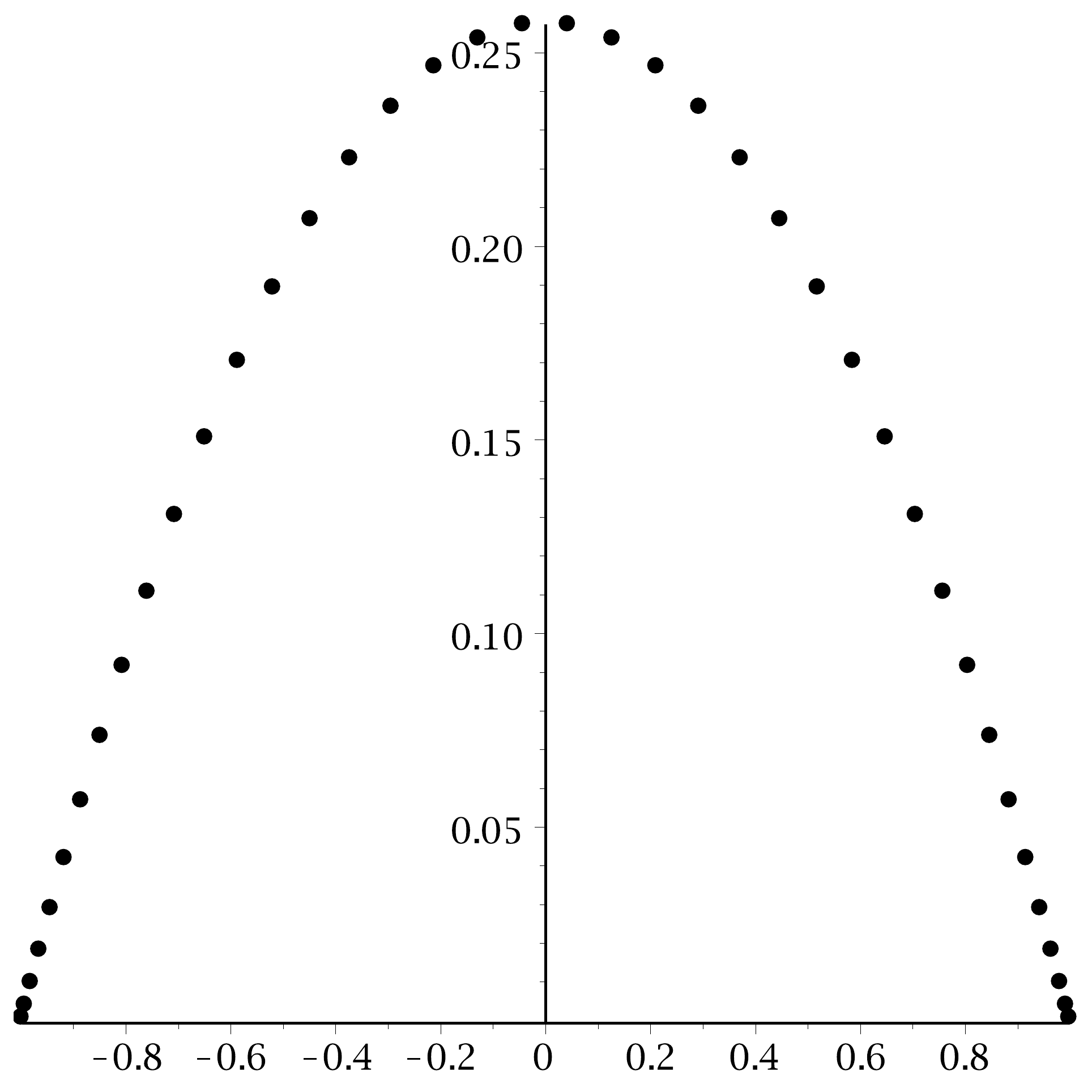}}
\caption{Zeros of $p_{20}^{10}(z)$ (left) and $p_{40}^{20}(z)$  (right). Here $\lambda_n=\lambda=1/2$.}
\label{fig_l05}
\end{figure}

\begin{figure}
\centerline{
\includegraphics[width=55mm,height=55mm]{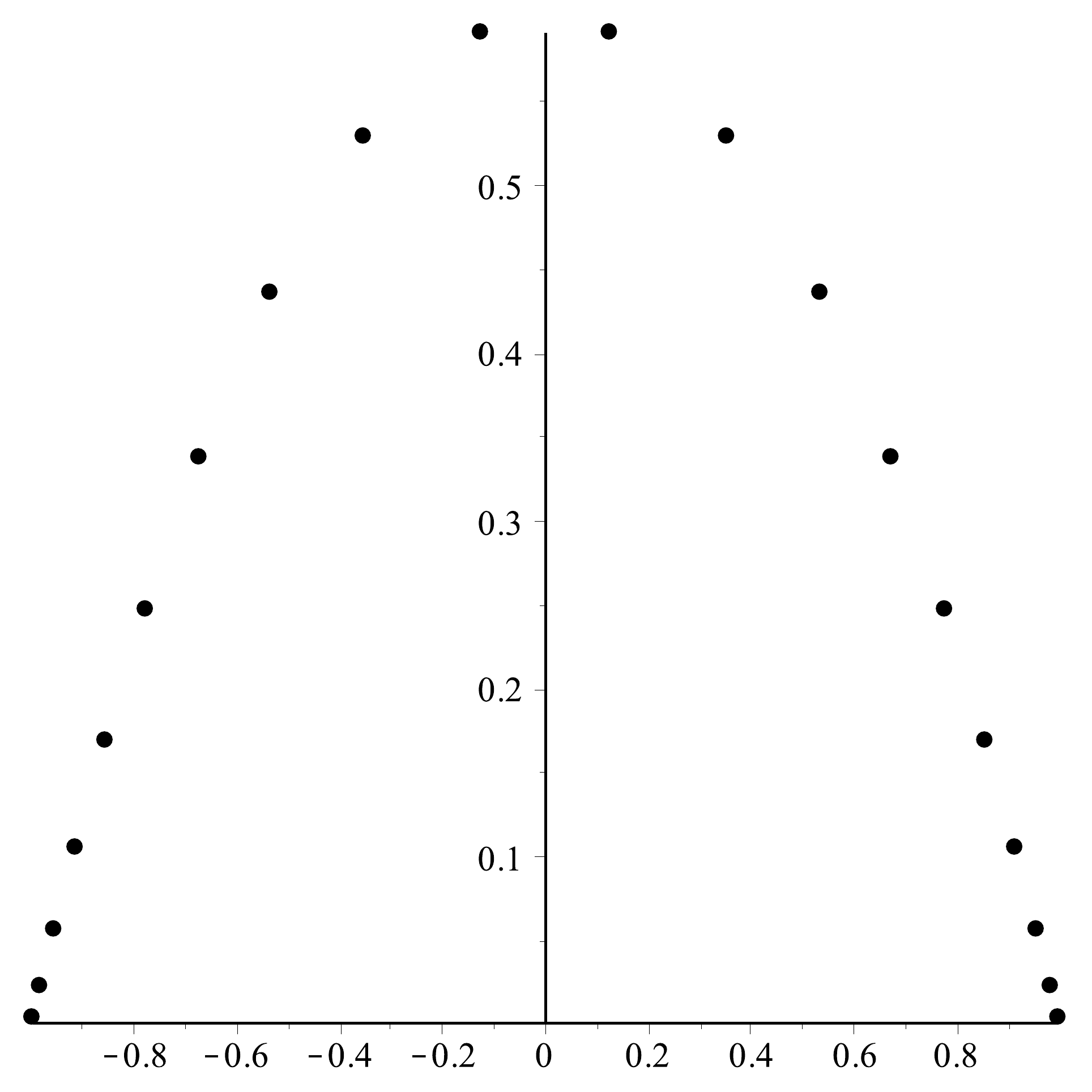}
\hspace{3mm}
\includegraphics[width=55mm,height=55mm]{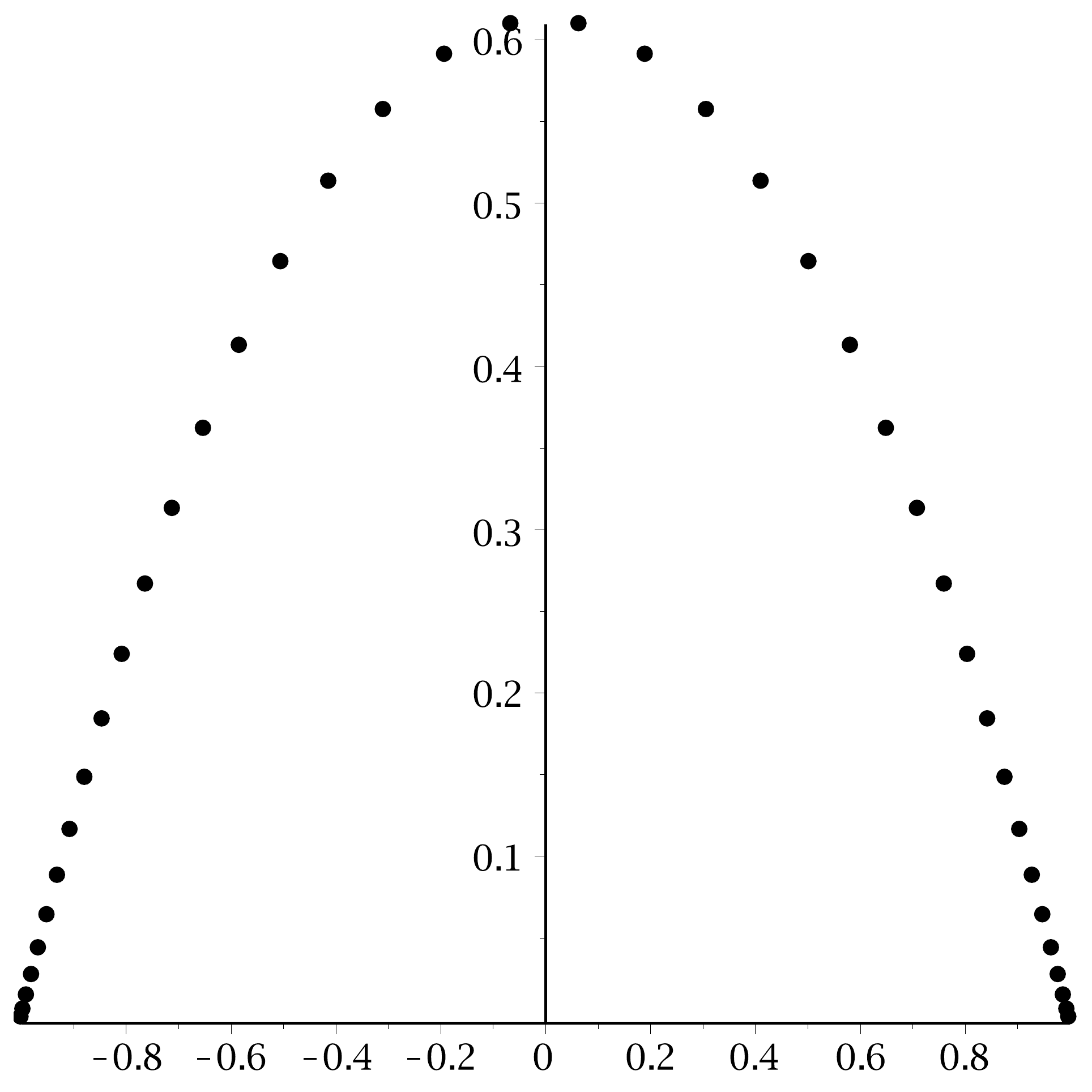}}
\caption{Zeros of $p^{20}_{20}(z)$ (left) and $p^{40}_{40}(z)$ (right). Here $\lambda_n=\lambda=1$.}
\label{fig_l1}
\end{figure}

\begin{figure}
\centerline{
\includegraphics[width=55mm,height=55mm]{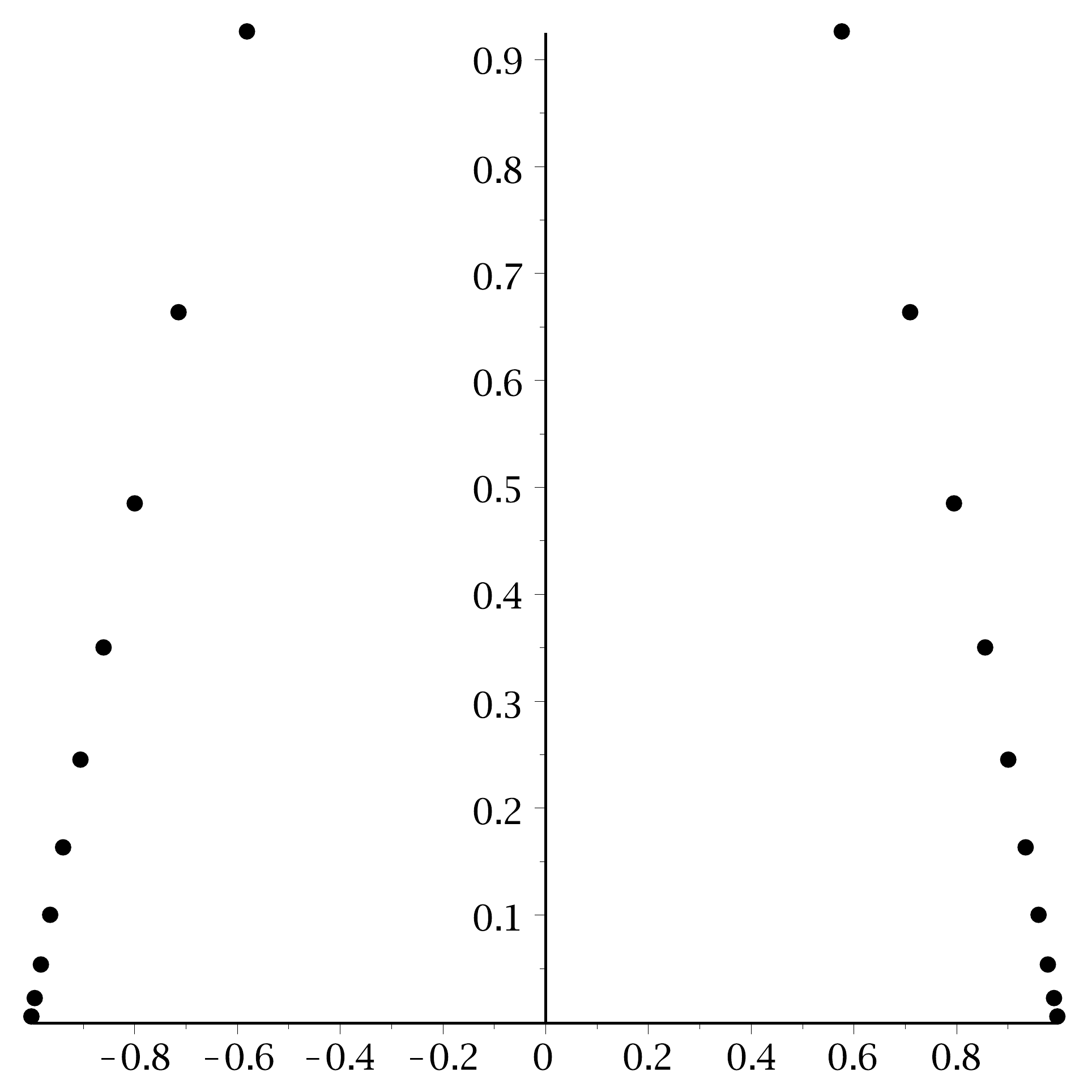}
\hspace{3mm}
\includegraphics[width=55mm,height=55mm]{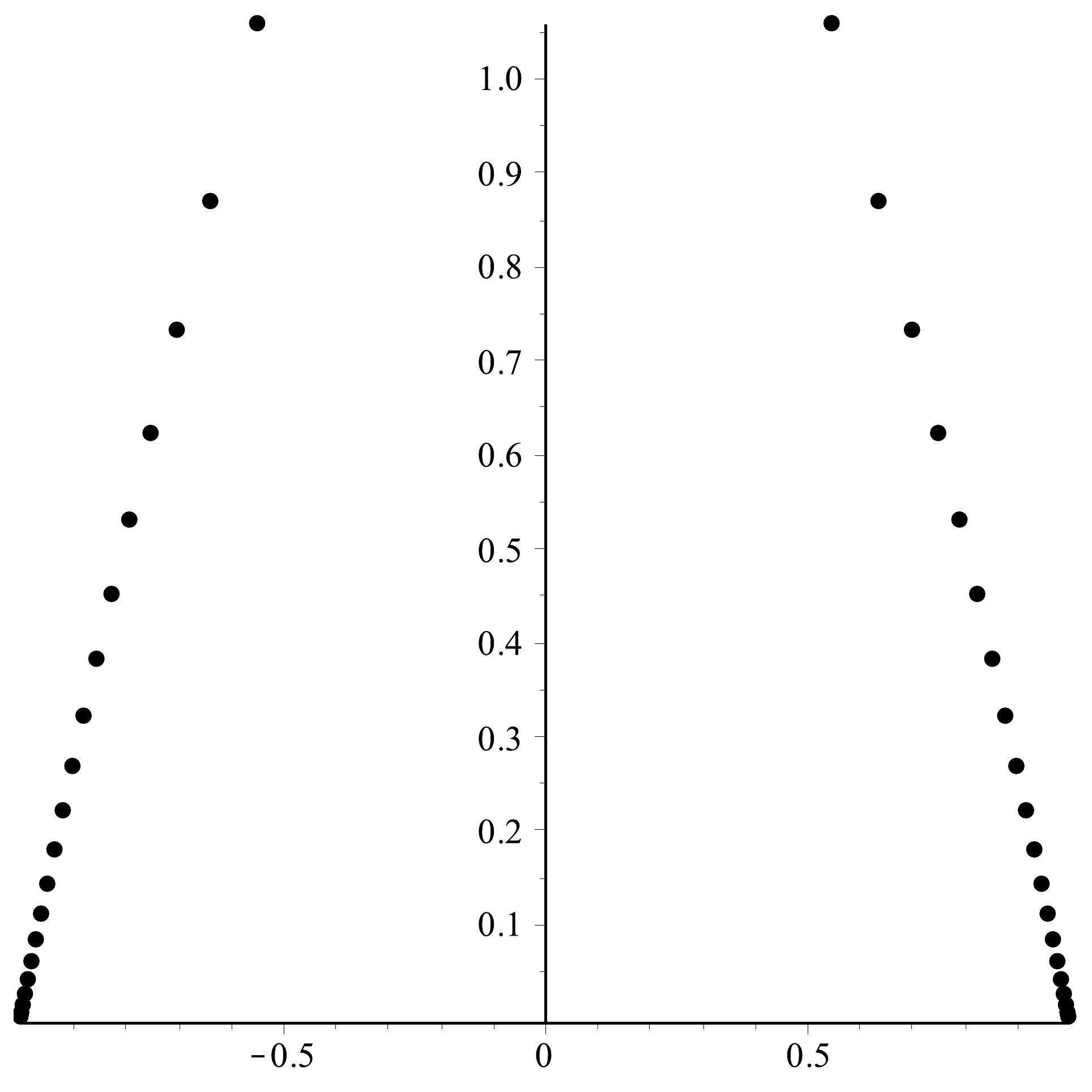}}
\caption{Zeros of $p_{20}^{30}(z)$ (left) and $p_{40}^{60}(z)$ (right). Here $\lambda_n=\lambda=3/2$.}
\label{fig_l15}
\end{figure}

\section{Statement of main results}

Consider $\lambda$ as defined before, and the function
\begin{equation}\label{hl}
 h(\lambda)=2\log\left(\frac{2+\sqrt{\lambda^2+4}}{\lambda}\right)-\sqrt{\lambda^2+4},
\end{equation}
and let $\lambda_0$ be the (unique) positive solution to the equation $h(\lambda)=0$, i.e.
\begin{equation}\label{l0}
 2\log\left(\frac{2+\sqrt{\lambda_0^2+4}}{\lambda_0}\right)-\sqrt{\lambda_0^2+4}=0.
\end{equation}

The fact that $\lambda_0$ is unique is a consequence of $h(\lambda)$ being a map from $[0,\infty)$ onto $\mathbb{R}$ and a decreasing function of $\lambda$. The value of $\lambda_0$ can be computed numerically, and we have $\lambda_0=1.325486839\ldots$

In our results, we will work with the following function:
\begin{equation}\label{varphi}
 \varphi(z)=z+(z^2-1)^2,
\end{equation}
which is analytic in $\mathbb{C}\setminus[-1,1]$ and maps $\mathbb{C}\setminus[-1,1]$ onto the exterior of the unit circle. 

We can prove the following result:
\begin{theorem}\label{Th1}
 Let $V(z)=-\ii\lambda z$ with $0\leq \lambda<\lambda_0$, then 
\begin{enumerate}
\item there exists a smooth curve $\gamma_{\lambda}$ joining $z=1$ and $z=-1$ that is a part of the level set given by
\begin{equation}
\operatorname{Re}\phi(z)=0,
\end{equation}
where
\begin{equation}\label{phiz}
\phi(z)=2\log\varphi(z)+\ii\lambda(z^2-1)^{1/2}
\end{equation}
and the cut of the square root is taken on $\gamma_{\lambda}$,
\item the measure
 \begin{equation}
 \dd\mu_{\lambda}(z)= \psi_{\lambda}(z)\dd z=-\frac{1}{2\pi\ii}\frac{2+\ii\lambda z}{(z^2-1)^{1/2}}\dd z,
 \end{equation}
with a branch cut taken on $\gamma_{\lambda}$, is the equilibrium measure on $\gamma_{\lambda}$ in the external field $\operatorname{Re}\, V(z)$,
 \item the curve $\gamma_{\lambda}$ joining $z=-1$ and $z=1$ has the $S$-property in the external field $\operatorname{Re}\, V(z)$,
 \item if we consider the normalized zero counting measure of $p_n^{\om}(z)$, then
\begin{equation}
 \mu_n=\frac{1}{n}\sum_{p^{\om}_n(\z)=0}\delta(\z)\stackrel{*}{\longrightarrow} \mu_{\lambda},
\end{equation}
as $n\to\infty$, in the sense of weak convergence of measures.
\end{enumerate}
\end{theorem}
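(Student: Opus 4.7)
My plan is to prove (i)--(iii) by explicit construction in the complex plane using the logarithmic potential theory of Stahl and Rakhmanov, and then obtain (iv) as a consequence of the Deift--Zhou nonlinear steepest descent analysis of the Fokas--Its--Kitaev Riemann--Hilbert problem, with the $g$-function supplied by $\mu_\lambda$.

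For (i) I first compute $\phi'(z) = (2+\ii\lambda z)/(z^2-1)^{1/2}$ by using the identity $\varphi'(z)/\varphi(z) = (z^2-1)^{-1/2}$, so that the unique finite critical point is $z_c = 2\ii/\lambda$. A direct evaluation with the natural branch of $(z^2-1)^{1/2}$ gives $\phi(z_c) = h(\lambda) + \ii\pi$, hence $\operatorname{Re}\phi(z_c) = h(\lambda) > 0$ for $\lambda<\lambda_0$. Consequently $\{\operatorname{Re}\phi = 0\}$ contains no saddle point, and combining the local structure of $\phi$ at the algebraic branch points $\pm 1$ (where $\phi(\pm 1)\in\ii\mathbb{R}$) with a continuation-in-$\lambda$ argument starting from the trivial case $\gamma_0 = [-1,1]$ produces a simple smooth arc $\gamma_\lambda$ joining $-1$ and $1$, lying in a level component disjoint from $z_c$ and varying continuously with $\lambda$.

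For (ii) and (iii) I identify $\phi = 2g - V - \ell$ with $g(z) = \int_{\gamma_\lambda}\log(z-s)\,\dd\mu_\lambda(s)$ and verify the standard equilibrium conditions: the jump $\phi'_+ + \phi'_- = 0$ across $\gamma_\lambda$ (inherited from the square-root cut) together with the level-set condition $\operatorname{Re}(\phi_+ + \phi_-) = 0$ on $\gamma_\lambda$ translate into
\[
2U^{\mu_\lambda}(z) + \operatorname{Re}V(z) = \textrm{const}, \qquad z\in\gamma_\lambda,
\]
while the normalization $\int\dd\mu_\lambda = 1$ follows from the expansion $\phi'(z) = \ii\lambda + 2/z + O(z^{-2})$ at infinity and a residue computation. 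The $S$-property is then essentially automatic: from $\phi'_+ = -\phi'_-$ and $\operatorname{Re}\phi \equiv 0$ along $\gamma_\lambda$ one concludes $|\phi'_+(z)| = |\phi'_-(z)|$, which is exactly the equality of one-sided normal derivatives of the effective potential $2U^{\mu_\lambda} + \operatorname{Re}V$ on the two sides of $\gamma_\lambda$.

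For (iv) I exploit the fact that $w(x) = \ee^{\ii\omega x}$ is entire to deform the orthogonality contour from $[-1,1]$ to $\gamma_\lambda$ without changing $p_n^\omega$, and then apply the Deift--Zhou method to the resulting RH problem on $\gamma_\lambda$. The standard chain of transformations $Y \to T \to S \to R$---normalization by the $g$-function, opening of lenses on both sides of $\gamma_\lambda$ (the $S$-property ensuring exponential decay of the off-diagonal lens jumps), a global parametrix built from a Szeg\H{o}-type outer function on $\mathbb{C}\setminus\gamma_\lambda$, and local Bessel parametrices at $\pm 1$---yields a small-norm problem with $R = I + O(n^{-1})$ uniformly in $z$. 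Extracting the $(1,1)$ entry gives strong asymptotics for $p_n^\omega$, and weak-$*$ convergence of $\mu_n$ to $\mu_\lambda$ follows from the convergence of the normalized logarithmic derivative $n^{-1}(p_n^\omega)'/p_n^\omega$ to the Cauchy transform of $\mu_\lambda$ on compact subsets of $\mathbb{C}\setminus\gamma_\lambda$. The main obstacle I anticipate is step (i): proving rigorously that $\gamma_\lambda$, whose very definition requires placing the branch cut on itself, exists as a single smooth arc joining $\pm 1$ for every $\lambda\in[0,\lambda_0)$, since the threshold $h(\lambda_0) = 0$ is precisely the value at which $z_c$ migrates onto the zero level set and forces a bifurcation to a two-arc support; a careful global picture of $\operatorname{Re}\phi$ in the plane, not merely a local saddle count, is needed to pin down which level component joins $-1$ and $1$ below the threshold.
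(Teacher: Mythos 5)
Your outline for parts (ii)--(iii) matches the paper's construction (ansatz for the Cauchy transform, explicit density, $S$-property from $\phi'_+=-\phi'_-$ together with $\operatorname{Re}\phi\equiv 0$), and your computations of $\phi'$, of the critical point $z_c=2\ii/\lambda$, and of $\operatorname{Re}\phi(z_c)=h(\lambda)$ are exactly the quantities the paper uses. The genuine gap is the one you yourself flag: the existence of a single arc of $\{\operatorname{Re}\phi=0\}$ joining $-1$ to $1$ for every $\lambda\in[0,\lambda_0)$ is the heart of part (1), and ``no saddle on the level set plus continuation in $\lambda$'' does not close it --- a level component through $z=1$ could in principle escape to infinity rather than reach $z=-1$ even without passing through a saddle, and a deformation argument must rule out exactly that degeneration. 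The paper closes this step with a global quadratic-differential argument: applying Teichm\"{u}ller's lemma (\cite[Theorem 14.1]{Strebel}) to the sectors cut out by the four trajectories emanating from the double zero $z^*=2\ii/\lambda$, it shows that when $\operatorname{Im}\xi(z^*)=h(\lambda)/2>0$ the trajectory $\Sigma_1$ from $z^*$ must cross the positive real axis at a point $x>1$ (by monotonicity of $\operatorname{Im}\xi$ on $[0,\infty)$) and therefore closes into a loop encircling both poles $\pm1$; since trajectories cannot intersect, the single trajectory emanating from $z=1$ is trapped inside this loop and has nowhere to end but $z=-1$. Some such global topological input is indispensable; your proposal identifies the right threshold but does not supply the trapping mechanism.

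Two further remarks. First, for part (2) you should also note that the positivity of $\dd\mu_\lambda$ along $\gamma_\lambda$ (needed for it to be a probability measure, not just a signed one) is precisely the trajectory condition $-Q_\lambda(z)\dd z^2>0$, so it comes for free from the quadratic-differential construction but must be checked separately in a pure level-set approach; the absence of zeros of $\phi'$ on the arc gives it, but say so. Second, your route to part (4) differs from the paper's: the paper deduces the weak-$*$ convergence directly from the Gonchar--Rakhmanov theorem once the $S$-property is in hand, whereas you propose to extract it from the Riemann--Hilbert strong asymptotics via convergence of $n^{-1}(p_n^{\om})'/p_n^{\om}$ to the Cauchy transform of $\mu_\lambda$. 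Your route is more self-contained (it avoids citing Gonchar--Rakhmanov as a black box and simultaneously yields the absence of spurious zeros away from $\gamma_\lambda$), at the cost of making part (4) of Theorem \ref{Th1} logically dependent on the entire analysis of Section \ref{sec_Proof2}; both are valid.
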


In particular, if $\lambda=0$, we get $\textrm{Re}(\log\varphi(z))=0$, and the curve $\gamma_0$ becomes the interval $[-1,1]$. Note that the theorem is consistent with what can be observed in the numerical experiments before.

Regarding the asymptotic behavior of the orthogonal polynomials $p_n^{\om}(z)$, we have the following result:
\begin{theorem}\label{Th2}
Let $0\leq \lambda<\lambda_0$ with $0\leq \lambda<\lambda_0$, then the following holds true:
\begin{enumerate}
 \item For large enough $n$ the orthogonal polynomial $p^{\om}_n(z)$ defined by \eqref{OPs} exists uniquely, and its zeros accumulate on $\gamma_{\lambda}$, as $n\to\infty$. 
 \item For $z\in\mathbb{C}\setminus \gamma_{\lambda}$, the monic orthogonal polynomial $p^{\om}_n(z)$ has the following asymptotic behavior:
\begin{equation}\label{outer_pn}
 p^{\om}_n(z)=\frac{\varphi(z)^{n+1/2}}{2^{n+1/2}(z^2-1)^{1/4}}
\exp\left(-\frac{\ii n\lambda}{2\varphi(z)}\right)
\left(1+\mathcal{O}\left(\frac{1}{n}\right)\right), \quad n\to\infty.
\end{equation}
 \item Fix a neighborhood $U$ of $\gamma_{\lambda}$ in the complex plane, and two discs
\begin{equation}
D(\pm 1,\delta)=\{z\in\mathbb{C}: |z\mp 1|<\delta\},
\end{equation}
with $\delta>0$. For $z\in U\setminus\left(D(1,\delta)\cup D(-1,\delta)\right)$, we have as $n\to\infty$,
\begin{equation}
\begin{aligned}\label{inner_pn}
 p^{\om}_n(z)&=\frac{2^{1/2-n} \ee^{-\frac{\ii n\lambda z}{2}}}{(1-z^2)^{1/4}}\times\\
&\left[\cos\left(\left(n+\frac{1}{2}\right)\arccos z+\frac{n\lambda}{2}(z^2-1)^{1/2}-\frac{\pi}{4}\right)+\mathcal{O}(1/n)\right].
\end{aligned}
\end{equation}
\item For $z\in D(1,\delta)$ we have
\begin{equation}\label{right_pn}
\begin{aligned}
 p^{\om}_n(z)&= 2^{-n}(2 n\pi)^{1/2}f(z)^{1/4}\ee^{-\frac{\ii n\lambda z}{2}} \\
&\times \left[\beta(z)^{-1}J_0\left(-\frac{\ii\, n\phi(z)}{2}\right)-\ii\beta(z) J'_0\left(-\frac{\ii\, n\phi(z)}{2}\right)+\mathcal{O}(1/n)\right], 
\end{aligned}
\end{equation}
as $n\to\infty$, in terms of standard Bessel functions, with $\phi(z)$ given by \eqref{phiz} and 
$f(z)=\phi(z)^2/16$. Here $\beta(z)=\left(\frac{z-1}{z+1}\right)^{1/4}$, with a branch cut taken on $\gamma_{\lambda}$.
%\[
% \xi(z)=-\frac{n}{2}(\lambda-\lambda_n)(z^2-1)^{1/2}+\frac{1}{2}\arccos z.
%\]
\end{enumerate}
\end{theorem}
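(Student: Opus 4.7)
The plan is to prove Theorem \ref{Th2} by applying the Deift--Zhou nonlinear steepest descent method to the Fokas--Its--Kitaev Riemann--Hilbert (RH) characterization of $p_n^{\om}$, leveraging the ingredients already produced by Theorem \ref{Th1}. First I would set up the standard $2\times 2$ RHP for $Y(z)$ whose $(1,1)$ entry is $p_n^{\om}(z)$, with jump $\begin{pmatrix}1 & w_n(x)\\ 0 & 1\end{pmatrix}$ on $[-1,1]$. Since $w_n(x)=\ee^{\ii\lambda n x}$ is entire, I can deform the oriented contour from $[-1,1]$ to the $S$-curve $\gamma_{\lambda}$ produced in Theorem \ref{Th1} without altering the orthogonality conditions; this is precisely where the $S$-property, $\operatorname{Re}\phi(z)=0$ on $\gamma_{\lambda}$, becomes essential.

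Next I would perform the usual $g$-function transformation $Y\mapsto T$, where
\[
g(z)=\int_{\gamma_{\lambda}}\log(z-s)\,\dd\mu_{\lambda}(s),
\]
using the equilibrium measure $\mu_{\lambda}$ from Theorem \ref{Th1}. The Euler--Lagrange conditions for the variational problem in the external field $\operatorname{Re}\,V$ translate into the statement that $T$ has constant jumps on $\gamma_{\lambda}$ and that the function $\phi$ in \eqref{phiz} satisfies $\phi_+=-\phi_-$ on $\gamma_{\lambda}$, while $\operatorname{Re}\phi$ is positive on both sides of a lens neighbourhood of $\gamma_{\lambda}$ (this is the content of the $S$-property together with the sign analysis). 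Then the standard opening-of-lenses transformation $T\mapsto S$ factors the jump on $\gamma_{\lambda}$ and pushes the off-diagonal jumps onto the two sides of the lens, where they are exponentially close to the identity as $n\to\infty$, uniformly on compact subsets away from $\pm 1$.

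The third step is the construction of parametrices. The global (outer) parametrix $N(z)$ solves the model RHP with constant jump on $\gamma_{\lambda}$ and has the explicit Szeg\H{o}-type form built from $\varphi(z)^{1/2}/(z^2-1)^{1/4}$; the factor $\exp(-\ii n\lambda/(2\varphi(z)))$ in \eqref{outer_pn} arises as the analytic extension off $\gamma_{\lambda}$ of the boundary values of the Cauchy transform of the $\ii\lambda z/(z^2-1)^{1/2}$ part of $\psi_{\lambda}$. At each endpoint $\pm 1$ the density $\psi_{\lambda}$ has a $(z^2-1)^{-1/2}$ singularity, which is the hard-edge profile, so the correct local parametrices $P^{(\pm 1)}$ are built from Bessel functions of order zero in a disc $D(\pm 1,\delta)$, matched to $N$ on the boundary via a conformal map $f(z)=\phi(z)^2/16$ (this explains the appearance of $J_0$ and $\beta(z)$ in \eqref{right_pn}). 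The disc around $-1$ is treated symmetrically.

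Finally, I would form $R(z)=S(z)P(z)^{-1}$ where $P$ equals $N$ outside the discs and equals $P^{(\pm 1)}$ inside them. A standard small-norm argument then gives $R=I+\mathcal{O}(1/n)$ uniformly in $z$, because the jumps of $R$ on the lens sides and on the disc boundaries are of size $\mathcal{O}(1/n)$ (the disc boundary is where the matching produces the $1/n$ order). Unravelling $Y=R\,P\,(\text{g--function factors})$ and reading off the $(1,1)$-entry then yields \eqref{outer_pn} on $\mathbb{C}\setminus(\gamma_{\lambda}\cup\overline{D(\pm1,\delta)})$, \eqref{inner_pn} after adding the two boundary values across $\gamma_{\lambda}$ (this produces the $\cos$ via the two exponentials $\ee^{\pm n\phi}$), and \eqref{right_pn} from the Bessel parametrix at $z=1$. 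Existence and uniqueness for large $n$ in item (1) follow from the solvability of the RHP for $Y$, which itself follows from the small-norm solvability of the RHP for $R$; that the zeros accumulate on $\gamma_{\lambda}$ is then a consequence of the weak-star convergence in Theorem \ref{Th1}, or can be read directly from \eqref{outer_pn} which is zero-free off $\gamma_{\lambda}$. The hardest part is the bookkeeping around the endpoints: one must choose the correct $1/4$-root factors in $\beta$, orient $\gamma_{\lambda}$ consistently with the branch cut of $(z^2-1)^{1/2}$ used in $\phi$, and verify the matching condition $P^{(\pm 1)}N^{-1}=I+\mathcal{O}(1/n)$ on $\partial D(\pm 1,\delta)$, since a sign or branch error there propagates into all four asymptotic formulas.
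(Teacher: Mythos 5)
Your proposal follows essentially the same route as the paper: the Fokas--Its--Kitaev RH problem posed on the $S$-curve $\gamma_{\lambda}$, the $g$-function transformation built from the equilibrium measure of Theorem \ref{Th1}, opening of lenses using the sign of $\operatorname{Re}\phi$ off $\gamma_{\lambda}$, the outer parametrix $N(z)$ together with Bessel local parametrices at $\pm 1$ via the conformal map $f(z)=\phi(z)^2/16$, and a small-norm argument giving $R=I+\mathcal{O}(1/n)$, from which all four items follow by unravelling the transformations. This matches the paper's proof in structure and in every essential ingredient (the only cosmetic difference is that you phrase the factor $\exp(-\ii n\lambda/(2\varphi(z)))$ as a Szeg\H{o}-type contribution, whereas in the paper it drops out directly from the explicit formula for $\ee^{ng(z)}$).
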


\begin{remark}
 The asymptotic results are consistent with Theorem 1.4 in \cite{KMcLVAV}, taking $\alpha=\beta=0$ and $h(z)=w_n(z)$, and they can be seen as a complex generalization of the Jacobi--type weight function considered in that reference. Note that the definition of the phase function $\psi(z)$ in \cite[formula (1.34)]{KMcLVAV}, see also \cite[formula (3.9)]{KV} can be adapted to this case. Namely, we have
\begin{equation}
 \psi(z)=\frac{(1-z^2)^{1/2}}{4\pi\ii}\oint_{\gamma} \frac{\log w_n(t)}{(t^2-1)^{1/2}}\frac{\dd t}{t-z}
=\frac{n\lambda}{2}(z^2-1)^{1/2},
\end{equation}
using residue calculation, which fits well with the estimates in Section 3 of the previous theorem. Here $\gamma$ is a smooth curve that encircles $[-1,1]$ once in the positive direction.
\end{remark}

\begin{remark}
 The previous result is also consistent with the strong asymptotics given in Theorem 2 in \cite{Apt}, with notation $Q(z)=-\tfrac{\ii\lambda z}{2}$ and  $\tilde{h}_n(z)\equiv 1$. 
\end{remark}

Regarding the recurrence coefficients in \eqref{TTRR} and the norm of the orthogonal polynomials, we can deduce the following result from the Riemann--Hilbert formulation and the steepest descent analysis:
\begin{theorem}\label{Th3}
Let $0\leq \lambda<\lambda_0$ with $0\leq \lambda<\lambda_0$, then the coefficients $a_n^2$ and $b_n$ in the three term recurrence relation
\begin{equation}
 xp_n^{\om}(x)=p_{n+1}^{\om}(x)+b_n p_n^{\om}(x)+a^2_n p_{n-1}^{\om}(x),
\end{equation}
exist for large enough $n$, and they satisfy
\begin{equation}
 \begin{aligned}
  a_n^2&=\frac{1}{4}+\frac{4-\lambda^2}{4(4+\lambda^2)^2}\frac{1}{n^2}+\mathcal{O}\left(\frac{1}{n^3}\right),\\
  b_n&=-\frac{2\ii\lambda}{(4+\lambda^2)^2}\frac{1}{n^2}+\mathcal{O}\left(\frac{1}{n^3}\right).
 \end{aligned}
\end{equation}
\end{theorem}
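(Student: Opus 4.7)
The plan is to extract the recurrence coefficients directly from the large--$z$ behavior of the Fokas--Its--Kitaev matrix $Y(z)$ whose $(1,1)$ entry is $p_n^{\om}$. Writing $Y(z)z^{-n\sigma_3}=I+Y_1/z+Y_2/z^2+\mathcal{O}(z^{-3})$ as $z\to\infty$, the identities
\begin{equation*}
a_n^2=(Y_1)_{12}(Y_1)_{21},\qquad b_n=(Y_1^{(n)})_{11}-(Y_1^{(n+1)})_{11}
\end{equation*}
follow from the definition of $Y$ and from the three--term recurrence, so the task reduces to computing $Y_1$ to accuracy $o(n^{-2})$. Existence of $p_n^{\om}$ for large $n$ is already part of Theorem \ref{Th2}, so $Y$ itself is well defined in this regime.

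I would invert the chain of transformations $Y\to T\to S\to R$ from the steepest descent analysis behind Theorem \ref{Th2}. Outside the lenses and the discs $D(\pm 1,\delta)$ one has a representation of the form
\begin{equation*}
Y(z)=\ee^{n\ell\sigma_3/2}\,R(z)\,N(z)\,\ee^{-n(g(z)-\ell/2)\sigma_3},
\end{equation*}
where $g$ is the $g$--function built from $\mu_{\la}$ and $N(z)$ is the global Szeg\H{o}--type parametrix for $[-1,1]$ with trivial exponents, dressed by the Szeg\H{o} function of $\ee^{-nV}$. Expanding each factor at infinity in closed form expresses $Y_1$ as a linear combination of the Laurent coefficients at infinity of $g$, $N$ and $R$. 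The contributions of $g$ and $N$ are explicit rational functions of $\la$, and the nontrivial piece comes from the expansion $R(z)=I+R_1(z)/n+R_2(z)/n^2+\mathcal{O}(n^{-3})$, with $R$ solving a small--norm RH problem whose jumps live on $\partial D(\pm 1,\delta)$ and arise from the mismatch between $N$ and the local Bessel parametrices at $z=\pm 1$. Applying the residue method iteratively to the Laurent expansion in $1/n$ of those jumps produces $R_1$ and $R_2$ as explicit rational functions in $z$ with simple poles at $\pm 1$, and their $1/z$ coefficients at infinity supply the remaining contribution to $Y_1$.

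Assembling the pieces and shifting $n\to n+1$ where needed yields the two stated expansions. The vanishing of the $1/n$ term in $a_n^2-\tfrac{1}{4}$ and in $b_n$ comes from a cancellation between the contributions of the two endpoints, which reflects the $z\mapsto -z$ skew--symmetry of the external field $V$. The main obstacle is the explicit algebraic bookkeeping: one must carefully collect the Szeg\H{o}--function values at $\pm 1$, the derivatives $\phi'(\pm 1)$ entering the Bessel parametrix expansion and the Laurent coefficients at infinity of $N$ and $g$, and then verify that the combination collapses to the clean rational expressions $(4-\la^2)/(4(4+\la^2)^2)$ and $-2\ii\la/(4+\la^2)^2$ announced in the theorem.
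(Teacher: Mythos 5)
Your overall strategy is the one the paper follows: read off the recurrence coefficients from the Laurent coefficients of $Y(z)z^{-n\sigma_3}$ at infinity, unwind $Y\mapsto T\mapsto S\mapsto R$, and obtain $R^{(1)}(z)$ and $R^{(2)}(z)$ explicitly by iterating the additive small-norm problem on $\partial D(\pm1,\delta)$ with the residue method; the need to go to second order because the first-order contributions cancel is correctly anticipated. Two remarks on where your version deviates. First, in the coupled regime $\om=\la n$ the global parametrix $N(z)$ carries \emph{no} Szeg\H{o}-function dressing: the weight is absorbed entirely into the equilibrium problem on $\gamma_{\la}$, so the jump of $S$ on $\gamma_{\la}$ is exactly the constant off-diagonal matrix; the Szeg\H{o} function only enters the fixed-$\om$ analysis in the appendix. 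This is harmless but should not be imported into the main computation.

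Second, and more substantively, your formula $b_n=(Y_1^{(n)})_{11}-(Y_1^{(n+1)})_{11}$ differs from the paper's single-degree identity $b_n=[Y_2]_{12}/[Y_1]_{12}-[Y_1]_{22}$, and it hides a trap. The recurrence holds at fixed $\om$, so $Y^{(n+1)}$ is the RH solution of degree $n+1$ with coupling $\la_{n+1}=\om/(n+1)=\la n/(n+1)$, not $\la$. Since $b_n=\mathcal{O}(n^{-2})$ while $(Y_1)_{11}$ contains a term growing like $n\!\int s\,\dd\mu_{\la}(s)=\ii\om/4$ plus an $\mathcal{O}(1/n)$ contribution $-\tfrac{\ii\la}{4(4+\la^2)n}$ from $R^{(1)}$, the entire answer arises from differencing these with the parameter shift included: doing so yields $-\tfrac{2\ii\la}{(4+\la^2)^2n^2}$, whereas differencing at the \emph{same} $\la$ gives the incorrect constant $-\tfrac{\ii\la}{4(4+\la^2)}$. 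Your phrase ``shifting $n\to n+1$ where needed'' must therefore explicitly include $\la\mapsto\om/(n+1)$ inside every $\la$-dependent coefficient, expanded to second order; the paper's route avoids this entirely. Relatedly, the vanishing of the $1/n$ terms is not a cancellation between the two endpoints: the residues $A^{(1)},B^{(1)}$ both contribute nontrivially to $R^{(1)}$ at order $1/z$, and the cancellation happens inside the algebraic combinations $\left([R_1]_{12}+\tfrac{\ii}{2}\right)\left([R_1]_{21}-\tfrac{\ii}{2}\right)$ and the $b_n$ formula, because $[R^{(1)}]$ is symmetric in its off-diagonal entries; this has to be checked by computation rather than attributed to the $z\mapsto-z$ symmetry alone.
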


\begin{remark}
 Higher order coefficients in the previous asymptotic expansion can in principle be computed by iterating the last step in the steepest descent analysis, see \cite[Section 8]{KMcLVAV} and also Section \ref{sec_Proof3} below.

Following this approach, it is also possible to obtain asymptotic estimates for related quantities, such as leading coefficients of the orthogonal polynomials or Hankel determinants, but we omit these results for brevity.
\end{remark}

\section{Proof of Theorem \ref{Th1}}\label{sec_Proof1}
The proof of Theorem \ref{Th1} will make use of the following tools: first we recall some standard ideas from logarithmic potential theory in the complex plane and the $S$-property, which is a key tool in the analysis of non--Hermitian orthogonality in the complex plane. In order to construct such a curve with the $S$-property, which will attract the zeros of $p_n^{\om}(z)$ as $n\to\infty$, we need to study the local and global trajectories of a certain quadratic differential $-Q_{\lambda}(z)\dd z^2$, that turns out to be explicit in this case. 

\subsection{Potential theory and the $S$-property}

From the work of Gonchar and Rakhmanov, \cite{GR,Rakh}, it is known that a key element in the analysis of the large $n$ behavior of zeros of orthogonal polynomials is the equilibrium measure in an external field, which in this case is  given by $\textrm{Re}\, V(z)$, with $V(z)=-\ii\lambda z$. 

We take the set $\mathcal{T}$ of smooth curves joining $z=-1 $ with $z=1$, with orientation from $-1$ to $1$. For any $\gamma\in\mathcal{T}$ we consider the space of probability measures on $\gamma$, denoted here $\mathcal{M}_1(\gamma)$, and we pose the following equilibrium problem: 
\begin{equation}
 \textrm{inf}\{I(\mu): \mu\in\mathcal{M}_1(\gamma)\},
\end{equation}
where 
\begin{equation}
 I(\mu)=\int_{\gamma} U^{\mu}(z)\dd\mu(z)+\textrm{Re}\int_{\gamma} V(z)\dd\mu(z)
\end{equation}
is the energy functional and
\begin{equation}
 U^{\mu}(z)=\int_{\gamma}\log\frac{1}{|z-y|}\dd\mu(y)
\end{equation}
is the logarithmic potential of the measure $\mu$. This problem has a unique solution for each $\gamma\in\mathcal{T}$, following the standard theory, see for instance \cite{ST}. Furthermore, the equilibrium measure $\mu$ is characterized by the so--called variational conditions. Define
\begin{equation}
 g(z)=\int_{\gamma} \log (z-s)\dd\mu(s), 
\end{equation}
which is analytic in $\mathbb{C}\setminus\gamma$. Note that $\textrm{Re}\, g(z)=-U^{\mu}(z)$. Then there exists a constant $\ell$ such that 
\begin{equation}\label{variational}
 \begin{aligned}
   \textrm{Re}\left(-g_+(z)-g_-(z)+V(z)\right) &=\ell, \qquad z\in\textrm{supp}\,\mu,\\
   \textrm{Re}\left(-g_+(z)-g_-(z)+V(z)\right)&\geq\ell, \qquad z\in\gamma.
 \end{aligned}
\end{equation}
 
When working with non--Hermitian orthogonality in the complex plane, we have an additional freedom to choose $\gamma$ without changing the orthogonality condition \eqref{OPs}. In order to find the precise curve that describes the asymptotic behavior of the zeros of $p^{\om}_n(x)$ as $n\to\infty$, we need an extra condition, which is called the $S$-property in the literature. We refer the reader to \cite{Stahl,Rakh,KS} for more details.

\begin{definition}
We say that a contour $\gamma$ has the $S$-property in the external field $\operatorname{Re}\, V(x)$ if the following two conditions are satisfied:
\begin{enumerate}
 \item there exists a set $E$ of zero capacity such that for any $\z\in \gamma\setminus E$, there exists a neighborhood $D=D(\z)$ such that the set $D\,\cap\,\operatorname{supp}\,\mu$ is an analytic arc,
 \item for any point on this analytic arc, the logarithmic potential of $\mu$ satisfies
\begin{equation}\label{Sprop}
 \frac{\partial }{\partial n_+}\left(U^{\mu}+\frac{1}{2}\operatorname{Re}\, V\right)=
 \frac{\partial }{\partial n_-}\left(U^{\mu}+\frac{1}{2}\operatorname{Re}\, V\right),
\end{equation}
where $\frac{\partial}{\partial n_{\pm}}$ indicates normal derivatives in opposite directions, and $\mu$ is the equilibrium measure for $\gamma$ in the external field $\operatorname{Re}\, V$.
\end{enumerate}
 \end{definition}

If one is able to find such a contour with the $S$-property in the external field $\textrm{Re}\,V(z)$, then it follows from the work of Gonchar and Rakhmanov, cf. \cite[Theorem 1.3]{Rakh}, that the zero counting measure on the zeros of $p^{\om}_n(x)$ satisfies
\begin{equation}
 \mu_n=\frac{1}{n}\sum_{p^{\om}_n(\z)=0}\delta(\z)\stackrel{*}{\longrightarrow} \mu,
\end{equation}
where $\mu$ is the equilibrium measure of $\gamma$ in the external field $\operatorname{Re}\, V$. Here $\stackrel{*}{\longrightarrow}$ indicates convergence in the weak topology, in the sense that
\begin{equation}
 \int_{\gamma} f(x)\dd\mu_n(x)\rightarrow \int_{\gamma} f(x)\dd\mu(x), \qquad n\to\infty,
\end{equation}
for any continuous function $f$. 

In the sequel we will denote the curve with the $S$-property as $\gamma_{\lambda}$, depending on the parameter $\lambda$, and $\mu_{\lambda}$ will be the equilibrium measure on $\gamma_{\lambda}$.

An alternative characterization of the curve $\gamma_{\lambda}$ with the $S$-property is a consequence of the Cauchy--Riemann equations, and has a form similar to the variational equations \eqref{variational}, but involving the imaginary part of the external field:
\begin{equation}\label{Sprop_Im}
   \textrm{Im}\left(-g_+(z)-g_-(z)+\frac{1}{2}V(z)\right) =\tilde{\ell}, \qquad z\in\textrm{supp}\,\mu_{\lambda},
\end{equation}
where the constant $\tilde{\ell}$ might be different in each component of $\textrm{supp}\,\mu$, if this consists of several pieces. Hence, combining \eqref{Sprop_Im} and \eqref{variational}, if $\gamma_{\lambda}$ has the $S$-property in the external field $\textrm{Re}\, V(z)$, we have
\begin{equation}\label{variational2}
   -g_+(z)-g_-(z)+V(z) =\ell+\ii\tilde{\ell}, \qquad z\in\textrm{supp}\,\mu_{\lambda}.
\end{equation}

Furthermore, it is known, see for example \cite{Rakh}, that the support of the equilibrium measure $\mu_{\lambda}$ is a union of analytic arcs that are trajectories of the quadratic differential $-Q_{\lambda}(z) \dd z^2$, which are given by the condition
\begin{equation}
-Q_{\lambda}(z) \dd z^2>0. 
\end{equation}

This function $Q_{\lambda}(z)$ is defined as
\begin{equation}\label{Rz_general}
 Q_{\lambda}(z)=\left(\int\frac{\dd \mu_{\lambda}(x)}{x-z}+\frac{V'(z)}{2}\right)^2.
\end{equation}

This equation is a key element in order to obtain properties of the equilibrium measure and its Cauchy transform, see for example \cite{MFR, KS}, but in order to obtain some explicit formula for the quadratic differential,  one normally has to make some assumptions on $\mu_{\lambda}$, to be proved later. Assume for the moment that $\mu_{\lambda}$ is supported on a single arc $\gamma_{\lambda}$ that connects $z=-1$ and $z=1$ in the complex plane, then the function
\begin{equation}
 w_{\lambda}(z)=\int_{\gamma_{\lambda}}\frac{\dd \mu_{\lambda}(x)}{z-x}
\end{equation}
is analytic in $\mathbb{C}\setminus\gamma_{\lambda}$, and it satisfies 
\begin{equation}\label{cond_w}
 \begin{aligned}
  w_{\lambda}(z)&=\frac{1}{z}+\mathcal{O}\left(\frac{1}{z^2}\right), \qquad z\to\infty,\\
  w_{\lambda +}(z)+w_{\lambda -}(z)&=V'(z)=-\ii\lambda, \qquad z\in\gamma_{\lambda},
 \end{aligned}
\end{equation}
where $w_{\pm}(z)$ denote the boundary values from the left (right) of the curve $\gamma_{\lambda}$. The second equation is a direct consequence of the formula \eqref{variational2}, since $w_{\lambda}(z)=g'(z)$.

Consequently, we look for $w_{\lambda}(z)$ in the form
%Bearing in mind that $z=\pm 1$ should be hard edges, we look for $w(z)$ in the form
\begin{equation}\label{ansatzw}
 w_{\lambda}(z)=-\frac{\ii\lambda}{2}+\frac{p_1(z)}{(z^2-1)^{1/2}},
\end{equation}
where $p_1(z)=a_0+a_1z$ is a polynomial of degree 1, because of the first condition in \eqref{cond_w}, and the square root is taken with a cut on $\gamma_{\lambda}$. From the first equation in \eqref{cond_w}, we obtain $a_0=1$ and $a_1=\ii\lambda/2$, so
\begin{equation}
 w_{\lambda}(z)=-\frac{\ii\lambda}{2}+\frac{2+\ii\lambda z}{2(z^2-1)^{1/2}}.
\end{equation}

Then, for $z\in\gamma_{\lambda}$, we have
\begin{equation}\label{wpm}
 \begin{aligned}
  w_{\lambda\pm}(z)&=-\frac{\ii\lambda}{2}\mp\ii\, \frac{2+\ii\lambda z}{2\sqrt{1-z^2}},\\
%  w_-(x)&=\frac{\ii\omega}{2}+\ii\, \frac{1-\frac{\ii\omega x}{2}}{\sqrt{1-x^2}},
 \end{aligned}
\end{equation}
where the branch of the square root that is real and positive on $\gamma_{\lambda}$. The density of the equilibrium measure can be recovered as
\begin{equation}\label{psiz}
 \dd \mu_{\lambda}(z)=\psi_{\lambda}(z)\dd z=-\frac{1}{2\pi\ii}\left(w_{\lambda +}(z)-w_{\lambda -}(z)\right)\dd z=\frac{1}{2\pi}\frac{2+\ii\lambda z}{\sqrt{1-z^2}}\dd z,
\end{equation}
using the Sokhotski--Plemelj formula, see for instance \cite[\S 1.4.2]{Gak}. The density can be extended to the complex plane with a cut on $\gamma_{\lambda}$:
\begin{equation}\label{psi}
  \dd \mu_{\lambda}(z)=\psi_{\lambda}(z)\dd z=-\frac{1}{2\pi\ii}\frac{2+\ii\lambda z}{(z^2-1)^{1/2}}\dd z.
\end{equation}

Finally, the quadratic differential $Q_{\lambda}(z)\dd z^2$ has the form
\begin{equation}\label{Rz}
 Q_{\lambda}(z)\dd z^2=\frac{(2+\ii\lambda z)^2}{4(z^2-1)}\dd z^2.
\end{equation}

In general, this argument will break down if the support of $\mu_{\lambda}$ has several components, in particular \eqref{ansatzw} will cease to be valid. For this reason, in the next section we will analyze this quadratic differential in more detail, with the goal of showing that there is indeed one trajectory of $-Q_{\lambda}(z)\dd z^2$ joining $z=-1$ and $z=1$, for small values of $\lambda$. This will prove the previous heuristic argument.

\subsection{Trajectories of the quadratic differential $-Q_{\lambda}(z)\dd z^2$}
\subsubsection{Local trajectories near critical points}
From \eqref{Rz}, it is clear that the finite critical points of the quadratic differential $-Q_{\lambda}(z)\dd z^2$ are two poles located at $z=\pm 1$, a double zero at $z=z^*=2\ii/\lambda$. At infinity, the quadratic differential has a pole of order $4$, since using the local parameter $\xi=1/w$, we get
\begin{equation}
\begin{aligned}
-\frac{1}{w^4} Q_{\lambda}\left(\frac{1}{w}\right) \dd w^2
&=\left(\frac{\lambda^2}{4w^4}-\frac{\ii\lambda}{w^3}+\frac{\lambda^2-4}{4w^2}-\frac{\ii\lambda}{w}+\mathcal{O}(1)\right) \dd w^2,\qquad w\to 0.
\end{aligned}
\end{equation}

Naturally, if $\lambda=0$, the double zero disappears and the pole at infinity becomes of order $2$, with negative residual.

In a neighborhood of a critical point the condition $-Q_{\lambda}(z) \dd z^2>0$ is known to be equivalent, see \cite[Section 4]{MFR}, to $\textrm{Im}\,\xi_{\lambda}(z)=\textrm{const}$, in terms of the local parameter $\xi_{\lambda}$, given by
\begin{equation}
 \xi_{\lambda}=\xi_{\lambda}(z)=\int^z \sqrt{-Q_{\lambda}(s)}\dd s.
\end{equation}

In our case, this local parameter can be computed explicitly:
\begin{equation}\label{xiz}
 \xi_{\lambda}(z)= \ii\log (z+(z^2-1)^{1/2})-\frac{\lambda(z^2-1)^{1/2}}{2},
\end{equation}
with a branch cut on $(-\infty,-1]\cup\gamma_{\lambda}$ and the principal value of the root and the logarithm.

From \eqref{xiz}, we have the following local behavior near $z=1$:
\begin{equation}
 \xi_{\lambda}(z)-\xi_{\lambda}(1)= \frac{\sqrt{2}}{2\pi}(-\lambda+2\ii)(z-1)^{1/2}+\mathcal{O}((z-1)^{3/2}).
\end{equation}

If we write $z=1+r\ee^{\ii\theta}$, the argument of the leading term is 
\begin{equation}
 \arg (-\lambda+2\ii)(z-1)^{1/2}=\frac{\theta}{2}-\arctan \frac{2}{\lambda}.	
\end{equation}

If this is to be real and positive, we get
\begin{equation}
  \theta \approx 2\arctan \frac{2}{\lambda},
\end{equation}
taking principal values of the argument. If $\lambda\to 0^+$ we get $\theta\to \pi^-$, so the trajectory follows the real axis from $z=1$. The angle decreases as we increase the value of $\lambda$, so the trajectory enters the upper half plane. By symmetry, we have a similar behavior near $z=-1$.

If $\lambda>0$, the quadratic differential has a double zero, located at
\begin{equation}
z^*=\frac{2\ii}{\lambda}.
\end{equation}

Locally, four trajectories emanate from $z=z^*$. Note that
\begin{equation}
 \xi_{\lambda}(z)-\xi_{\lambda}(z^*)=\frac{\ii\lambda^2}{4\sqrt{\lambda^2+4}}(z-z^*)^2+\mathcal{O}((z-z^*)^3).
\end{equation}

So, if we write $z=z^*+r\ee^{\ii\theta}$ and we take arguments in the previous equality, we get
\begin{equation}
 \frac{\pi}{2}+2\theta\equiv 0\,\, (\textrm{mod}\, \pi), 
\end{equation}
which gives the possible angles 
\begin{equation}
 \theta=\pm\frac{\pi}{4}, \qquad
 \theta=\pm\frac{3\pi}{4}.
\end{equation}

Regarding infinity, the pole of order $4$ attracts the trajectories in two directions with angle $\pi$.

\subsubsection{Global trajectories}
The main tool that we will use to analyze the global behavior of trajectories of $-Q_{\lambda}(z)\dd z^2$ is the Teichm\"{u}ller lemma, \cite[Theorem 14.1]{Strebel}. First we define the following:
\begin{definition}
 A $Q_{\lambda}$-polygon is a curve $\Gamma$ composed of open straight arcs, along which $\arg Q_{\lambda}(z)\dd z^2=\theta=\textrm{const}$, with $0\leq\theta\leq 2\pi$ and their endpoints, which can be critical points.
\end{definition}

In particular, we can use horizontal ($\theta=0$) and vertical ($\theta=\pi$) trajectories of the quadratic differential as boundaries of a $Q_{\lambda}$-polygon. 

Let us denote the zeros and poles of $Q_{\lambda}(z)$ as $z_i$, and assign a number $n_i$ to each of them, so that $n_i$ is positive and equal to the order of the zero if $z_i$ is a zero, and $n_i$ is negative and equal to the order of the pole if $z_i$ is a pole. Then, we have the following result, see \cite[Theorem 14.1]{Strebel} for the proof:
\begin{theorem} 
Let $D$ be the interior of a simple closed $Q_{\lambda}$-polygon $\Gamma$, with sides $\Gamma_j$ and interior angles $\theta_j$ at its vertices, $0\leq \theta\leq 2\pi$, and suppose that $Q_{\lambda}(z)$ is meromorphic in $\overline{D}$ (the only points which can be critical on the boundary of the polygon are the vertices). Then
\begin{equation}
 \sum_{j}\left(1-\theta_j\frac{n_j+2}{2\pi}\right)=2+\sum_{i} n_i, %\qquad \beta_j=1-\theta_j\frac{n_j+2}{2\pi},
\end{equation}
where the index $i$ runs over all critical points inside the sector.
\end{theorem}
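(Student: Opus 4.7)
Plan: The plan is to view the statement as a Gauss--Bonnet formula for the singular flat metric induced on $\overline{D}$ by the quadratic differential $-Q_{\lambda}(z)\,\dd z^{2}$. Introduce the natural local parameter $\zeta(z)=\int^{z}\sqrt{Q_{\lambda}(s)}\,\dd s$ on $\overline{D}$ minus the critical points; the associated metric $|\dd\zeta|^{2}=|Q_{\lambda}(z)|\,|\dd z|^{2}$ is flat away from the zeros and poles of $Q_{\lambda}$, and the sides $\Gamma_{j}$ of the polygon, along which $\arg(Q_{\lambda}\,\dd z^{2})$ is constant, are straight segments in $\zeta$, hence geodesics of this metric.

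First I would determine the local geometry of the metric near a critical point $z_{0}$ of order $n$. From $Q_{\lambda}(z)\sim c(z-z_{0})^{n}$ one obtains $\zeta-\zeta_{0}\sim c'(z-z_{0})^{(n+2)/2}$, so a small loop around $z_{0}$ in the $z$-plane wraps $(n+2)/2$ times in $\zeta$. Thus $z_{0}$ is a conical singularity of total angle $(n+2)\pi$, with concentrated curvature $2\pi-(n+2)\pi=-n\pi$. The same local model shows that if a boundary vertex $z_{j}$ is a critical point of order $n_{j}$, then $z\mapsto\zeta$ dilates Euclidean angles at $z_{j}$ by the factor $(n_{j}+2)/2$, so the interior angle $\theta_{j}$ measured in the $z$-plane corresponds, in the flat metric, to an angle $\theta_{j}(n_{j}+2)/2$. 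A regular boundary vertex corresponds to $n_{j}=0$ and simply recovers the ordinary Euclidean angle $\theta_{j}$.

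The next step is to apply Gauss--Bonnet to $D$, which is topologically a disc, $\chi(D)=1$, with flat interior away from the singularities, geodesic boundary, interior conical points contributing curvature $-n_{i}\pi$, and corner contributions $\pi-\theta_{j}(n_{j}+2)/2$ at the vertices. This gives
\begin{equation*}
\sum_{i}(-n_{i}\pi)+\sum_{j}\Bigl(\pi-\tfrac{1}{2}\theta_{j}(n_{j}+2)\Bigr)=2\pi,
\end{equation*}
and dividing by $\pi$ and rearranging produces the asserted identity.

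The main obstacle is that at a pole of order $\geq 2$ the cone angle $(n+2)\pi$ becomes zero or negative, so the flat metric is no longer a genuine cone and the geometric Gauss--Bonnet formulation has to be treated with care (cylindrical or infinite ends, possibly incomplete metrics). To make the pole cases rigorous without these geometric subtleties, I would reformulate the computation as an application of the argument principle to a suitable branch of the one-form $\sqrt{Q_{\lambda}(z)}\,\dd z$: its total change of argument along $\partial D$ equals on the one hand a sum $-\pi\sum_{i} n_{i}$ coming from the logarithmic derivative $\tfrac{1}{2}Q_{\lambda}'/Q_{\lambda}$ inside $D$, and on the other hand vanishes on each geodesic side while contributing exactly $\pi-\theta_{j}(n_{j}+2)/2$ at each corner. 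Equating the two expressions, with careful bookkeeping of the branch cuts introduced by the square root, yields the same identity and covers poles of arbitrary order, so this reformulation is the version I would ultimately write down.
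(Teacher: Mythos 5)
This statement is Teichm\"{u}ller's lemma, which the paper does not prove at all: it is quoted verbatim from Strebel \cite[Theorem 14.1]{Strebel}, so there is no in-paper argument to compare against. Your proposal is essentially the standard proof of that result, and it is correct. The Gauss--Bonnet bookkeeping checks out: near a critical point of order $n$ one has $\zeta-\zeta_0\sim c'(z-z_0)^{(n+2)/2}$, giving cone angle $(n+2)\pi$, concentrated curvature $-n\pi$ at interior critical points, and metric corner angle $\theta_j(n_j+2)/2$ at the vertices; substituting into $\int K\,dA+\sum_j(\pi-\alpha_j)=2\pi\chi(D)$ with geodesic sides and dividing by $\pi$ yields exactly the stated identity. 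You are also right to flag that the conical picture degenerates at poles of order $\geq 2$ (where $(n+2)/2\leq 0$ and $\zeta$ is logarithmic or meromorphic rather than a fractional power), and your fallback --- the argument principle applied to $\sqrt{Q_\lambda(z)}\,\dd z$ (or, as in Strebel, to $Q_\lambda$ itself, using that $\arg Q_\lambda+2\arg \dd z$ is constant along each side and tracking the jumps at the vertices) --- is precisely how the degenerate cases are handled rigorously. Note that those degenerate cases are the ones actually used in the paper: the lemma is applied with the point at infinity (a pole of order $4$, so $n=-4$ and the vertex term becomes $1+\theta/\pi$) as a vertex of the $Q_\lambda$-polygon, so the argument-principle formulation, not the naive cone-metric one, is the version you should write down in full.
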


In the present situation, consider first the four trajectories emanating from the double zero $z^*=2\ii/\lambda$, with starting angles equal to $\pi/2$, see Figure \ref{fig_traject1}. 

\begin{figure}[h]
\centerline{\includegraphics[width=75mm,height=60mm]{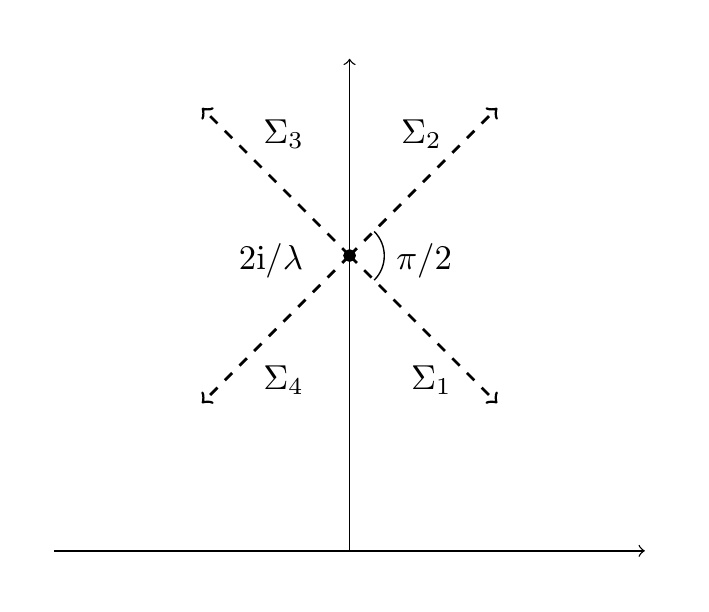}}
\caption{Local trajectories emanating from $z=2\ii/\lambda$.}
\label{fig_traject1}
\end{figure}

Assume first that the four trajectories from $z=z^*$ diverge to infinity, and consider one of the sectors delimited by two adjacent trajectories, say $\Omega$. The internal angle at the double zero $z=z^*$ is $\pi/2$, so we have
\begin{equation}
 1+\frac{\theta}{\pi}=2+\sum_{i} n_i,
\end{equation}
where $\theta$ is the angle at infinity. If both poles $z=\pm 1$ are in $\Omega$, then the right hand side of the equation is $0$, which does not give a valid value for $\theta$. If there are no poles in $\Omega$ then the right hand side is $2$, which is possible, but clearly cannot happen in the four sectors, since the poles must be in one of them.

Therefore, we can have two situations: 
\begin{enumerate}
 \item The trajectories define four infinite sectors, two of which are free of poles, one contains $z=-1$ and another one contains $z=1$, symmetric with respect to the imaginary axis.
 \item The trajectories define two infinite sectors and a finite one. If we have a closed trajectory from $z=z^*$, then we get
\begin{equation}
 0=2+\sum_{i} n_i,
\end{equation}
 so both poles $z=\pm 1$ must be inside the loop. So in this case, the finite trajectory encircles the two poles and the other two regions are infinite.
\end{enumerate}

An intermediate situation can take place: two trajectories go to infinity and the other two connect $z=z^*$ with $z=1$ and with $z=-1$. This should take place when $\textrm{Im}\, \xi(1)=\textrm{Im}\, \xi(z^*)$, and since $\textrm{Im}\, \xi(1)=0$, we get a value of $\lambda$, say $\lambda_0$, that must satisfy the equation
\begin{equation}\label{eq_l0}
 2\log\left(\frac{2+\sqrt{\lambda_0^2+4}}{\lambda_0}\right)-\sqrt{\lambda_0^2+4}=0.
\end{equation}

Thus, this formula defines the value $\lambda_0$ that separates the two different cases in the behavior of the zeros of $p_n^{\om}(x)$. Namely, as a consequence of the previous analysis, we have the following result:
\begin{lemma}
 Let $\lambda_0$ be defined as the unique positive solution of \eqref{eq_l0}. Then
\begin{itemize}
 \item If $0\leq\lambda<\lambda_0$, then there exists a trajectory of the quadratic differential $-Q_{\lambda}(z)\dd z^2$ that joins the points $z=1$ and $z=-1$. 
 \item If $\lambda>\lambda_0$, then the trajectories of the quadratic differential $-Q_{\lambda}(z)\dd z^2$ emanating from $z=1$ and $z=-1$ diverge to infinity in opposite directions.
\end{itemize}
\end{lemma}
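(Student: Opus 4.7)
The plan is to turn the topological case analysis carried out above via the Teichm\"uller lemma into a sharp quantitative criterion by evaluating the explicit local parameter $\xi_\lambda(z)$ from \eqref{xiz} at the double zero $z^*=2\ii/\lambda$. By the very definition of the quadratic differential, trajectories of $-Q_\lambda(z)\dd z^2$ are level curves $\operatorname{Im}\xi_\lambda(z)=\mathrm{const}$, and with the branch specified after \eqref{xiz} one checks directly that $\xi_\lambda(\pm 1)=0$. Consequently the intermediate critical configuration mentioned just before the lemma — namely, two trajectories emanating from $z^*$ and terminating at the simple poles $z=\pm 1$ — occurs precisely when $\operatorname{Im}\xi_\lambda(z^*)=0$.

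The first step is the computation of $\xi_\lambda(z^*)$. Substituting $z^*=2\ii/\lambda$ gives $(z^*)^2-1=-(\lambda^2+4)/\lambda^2$, so with the principal branch $((z^*)^2-1)^{1/2}=\ii\sqrt{\lambda^2+4}/\lambda$ and therefore $z^*+((z^*)^2-1)^{1/2}=\ii(2+\sqrt{\lambda^2+4})/\lambda$ lies on the positive imaginary axis. Taking the principal logarithm and simplifying yields
\begin{equation*}
\xi_\lambda(z^*)=\ii\log\!\left(\frac{2+\sqrt{\lambda^2+4}}{\lambda}\right)-\frac{\pi}{2}-\frac{\ii}{2}\sqrt{\lambda^2+4},
\end{equation*}
so that $\operatorname{Im}\xi_\lambda(z^*)=\tfrac{1}{2}h(\lambda)$ with $h$ the function defined in \eqref{hl}. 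Hence the intermediate configuration is realised exactly when $h(\lambda)=0$, i.e. $\lambda=\lambda_0$.

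The second step is to assemble the case dichotomy using the Teichm\"uller count already performed: away from the critical value only the two generic topologies remain, namely (a) a distinguished trajectory joining $z=1$ to $z=-1$ with the four trajectories from $z^*$ all escaping to infinity, or (b) a closed critical loop from $z^*$ enclosing both poles, the trajectories emerging from $\pm 1$ diverging to infinity. Strict monotonicity of $h(\lambda)$ (noted after \eqref{l0}) ensures that the value $\lambda_0$ is the only place where (c) can degenerate, and continuous dependence of the critical graph on $\lambda$ then forces each of the open intervals $[0,\lambda_0)$ and $(\lambda_0,\infty)$ to lie entirely in case (a) or entirely in case (b). To pin down which is which, I would evaluate at the boundary value $\lambda=0$: there the weight is real, $-Q_0(z)\dd z^2=-\dd z^2/(z^2-1)$, and the unique trajectory joining $\pm 1$ is the segment $[-1,1]$, so case (a) holds; by continuity, (a) then persists throughout $[0,\lambda_0)$ and (b) throughout $(\lambda_0,\infty)$.

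The main obstacle is the continuity/rigidity argument in the last step: one must rule out any topological jump of the critical graph inside either of the two open intervals. This follows from the standard continuous dependence of the critical graph of a meromorphic quadratic differential on its parameters, see \cite[Ch.\ III]{Strebel}, combined with the preceding Teichm\"uller enumeration, which leaves no other admissible picture. Everything else in the proof is a matter of unpacking the explicit formulas \eqref{xiz} and \eqref{hl} at the critical point $z^*$, which is essentially algebraic.
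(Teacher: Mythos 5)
Your first step --- the computation $\operatorname{Im}\xi_\lambda(z^*)=\tfrac12 h(\lambda)$, so that the degenerate configuration with saddle connections from $z^*$ to $z=\pm1$ occurs exactly at $\lambda=\lambda_0$ --- is correct and is precisely the computation the paper uses. The problem is in your second step: the two ``generic topologies'' (a) and (b) are paired up wrongly, and in fact each of them, as you state it, is topologically impossible. If the four trajectories from $z^*$ all escape to infinity, the Teichm\"{u}ller count forces $z=1$ and $z=-1$ to lie in \emph{different} sectors (both poles in one sector would give $1+\theta/\pi=0$, which is inadmissible), so the poles are separated by critical trajectories and cannot be joined by another trajectory; conversely, if the critical trajectory from $z^*$ closes up around both poles, the trajectories emanating from $z=\pm1$ are trapped inside that loop and cannot diverge to infinity. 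The correct pairing is the opposite of yours: a closed loop around both poles forces the trajectory from $z=1$ to terminate at $z=-1$ (this confinement is exactly how the paper produces the arc $\gamma_\lambda$ for $\lambda<\lambda_0$), while four separating trajectories to infinity force the trajectories from $\pm1$ to diverge. Your headline conclusions for the two ranges of $\lambda$ happen to agree with the lemma, but they are derived from an inadmissible dichotomy, so the argument as written does not establish them.

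The second gap is the continuity/rigidity step. The critical graph of a quadratic differential does \emph{not} vary continuously in parameters --- the whole content of the lemma is that it jumps at $\lambda_0$ --- so you would have to argue that jumps can only occur when a saddle connection forms (together with the absence of recurrent trajectories), which a generic citation of Strebel does not supply; moreover your base point $\lambda=0$ is degenerate, since the double zero $z^*=2\ii/\lambda$ escapes to infinity there and the whole enumeration built on it collapses. The paper avoids both issues with a direct, per-$\lambda$ argument: it shows that $\operatorname{Im}\xi_\lambda$ is continuous and strictly increasing along the positive real axis, vanishing at $x=1$, so the level curve $\operatorname{Im}\xi_\lambda=\operatorname{Im}\xi_\lambda(z^*)$ through $z^*$ meets $(0,\infty)$ at a unique point $x$, with $x>1$ precisely when $h(\lambda)>0$, i.e.\ $\lambda<\lambda_0$; whether $x>1$ or $x<1$ then decides directly, for that fixed $\lambda$, whether the $z^*$-trajectory encircles both poles or separates them. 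If you replace the continuity argument by this monotonicity argument and swap the pairing in (a)/(b), your proof becomes essentially the paper's.
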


\begin{proof}
In the case $\lambda=0$ it is clear that $\textrm{Im}\,\xi_0(z)=\textrm{Im}\,\xi_0(1)=0$ is equivalent to $\textrm{Re}( \log\varphi(z))=0$. Since the function $\varphi(z)$ maps the interval $[-1,1]$ onto the unit circle, so $\log\varphi(z)$ is purely imaginary for $z\in[-1,1]$. This comes as no surprise, because the case $\lambda=0$ corresponds to the classical Legendre polynomials.

If $\lambda>0$, let us consider the function $\operatorname{Im}(\xi(z))$ when $z=x\geq 0$ is on the positive real axis. From \eqref{xiz}, we get that if $x\geq 1$, then
\begin{equation}
 \operatorname{Im}(\xi(x))=\log(x+\sqrt{x^2-1}),
\end{equation}
which takes the value $0$ at $x=1$ and is increasing with $x$. If $0\leq x\leq 1$, then $\varphi(x)=x+\sqrt{x^2-1}$ maps the interval $[0,1]$ onto the arc of the unit circle $\operatorname{e}^{\operatorname{i}\theta}$, with $0\leq \theta\leq \pi/2$, so $\operatorname{i}\log(x+\sqrt{x^2-1})\in\mathbb{R}$, and therefore
\begin{equation}
 \operatorname{Im}(\xi(x))=-\frac{\lambda\sqrt{1-x^2}}{2}, \qquad 0\leq x \leq 1,
\end{equation}
which takes the value $-\lambda/2<0$ at $x=0$ and increases to $0$ when $x=1$. As a consequence, this imaginary part is a continuous and increasing function of $x$ on the positive real axis.

Now consider $\operatorname{Im}\,(\xi(z^*))$, which is equal to
\begin{equation}
 \operatorname{Im}\, \xi\left(\frac{2\operatorname{i}}{\lambda}\right)
=\log\left(\frac{\sqrt{\lambda^2+4}+2}{\lambda}\right)-\frac{\sqrt{\lambda^2+4}}{2}.
\end{equation}

This function maps $[0,\infty)$ onto $\mathbb{R}$, with value $0$ at $\lambda_0$, and it is a decreasing function of $\lambda$. 

Therefore, because of continuity and monotonicity, if we fix a value of $\lambda$, there exists a (unique) real and positive value $x$ such that $\operatorname{Im}(\xi(x))=\operatorname{Im}(\xi(z^*))$. If $\lambda>\lambda_0$, then $0<x<1$, and if $0<\lambda<\lambda_0$, then $x>1$. 

If $0<\lambda<\lambda_0$ then the trajectory $\Sigma_1$ crosses the real axis at a point $x>1$, and we have a closed trajectory starting and ending at $z^*$ and encircling the two poles $z=\pm 1$. In this case, there is a trajectory emanating from $z=1$, and since trajectories cannot intersect, it must end at the other pole $z=-1$. This proves that if $0<\lambda<\lambda_0$, there is a curve joining the two critical points $z=\pm 1$.

If $\lambda>\lambda_0$, the trajectory $\Sigma_1$ from $z^*$ crosses the real axis at a point $x<1$, and we must be in the first case before. We apply the Teichm\"{u}ller lemma to the sectors bounded by $\Sigma_2$ and $\Sigma_3$ and by $\Sigma_4$ and $\Sigma_1$. In both cases, we get that the angle at infinity is equal to $\pi$, so the two trajectories diverge to infinity in opposite directions. In the two remaining sectors, the angle at infinity is $0$, so the trajectory emanating from $z=\pm 1$ must diverge to infinity within these sectors. 
 
\end{proof}

This proves the first part of Theorem \ref{Th1}, since a trajectory of the quadratic differential satisfies $-Q_{\lambda}(z)\dd z^2>0$, which is equivalent to $\textrm{Im}\,\xi(z)=\textrm{const}$, the constant being equal to $\textrm{Im}\,\xi(1)=0$ in this case. Bearing in mind \eqref{xiz}, we get the desired result.

Next, because of the explicit construction of $\dd\mu_{\lambda}$ before, see \eqref{psi} and the discussion leading to this formula, it follows that \eqref{variational2} are satisfied, so $\gamma_{\lambda}$ has the $S$-property in the external field $\textrm{Re}\, V(z)$. Finally, the weak convergence of the zero counting measure for $p^{\om}_n(z)$ is a direct consequence of the work of Gonchar and Rakhmanov, and this completes the proof of Theorem \ref{Th1}.

\begin{figure}
\centerline{
\includegraphics[width=43mm,height=63mm]{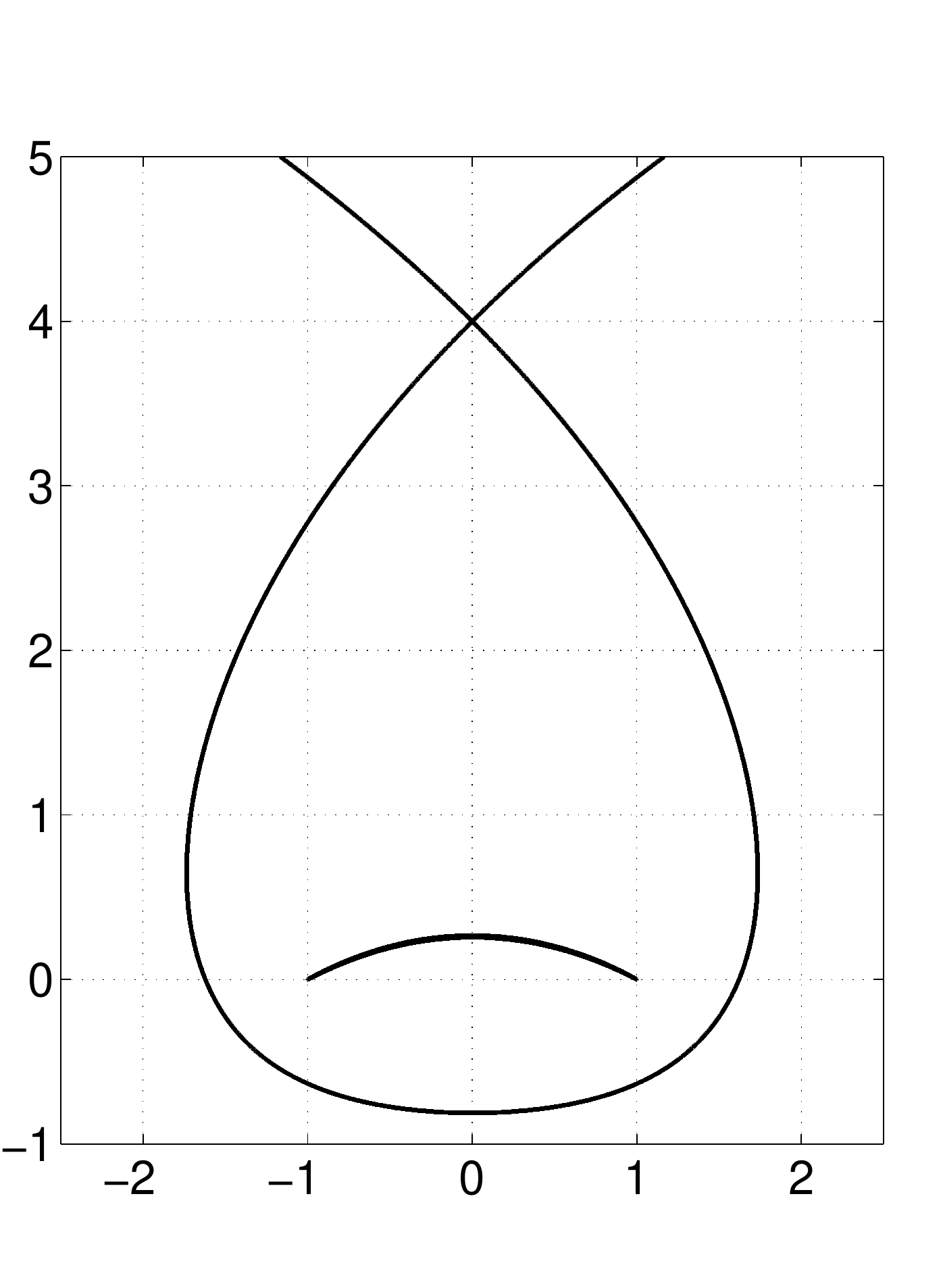}
%\hspace{-30mm}
\includegraphics[width=43mm,height=60mm]{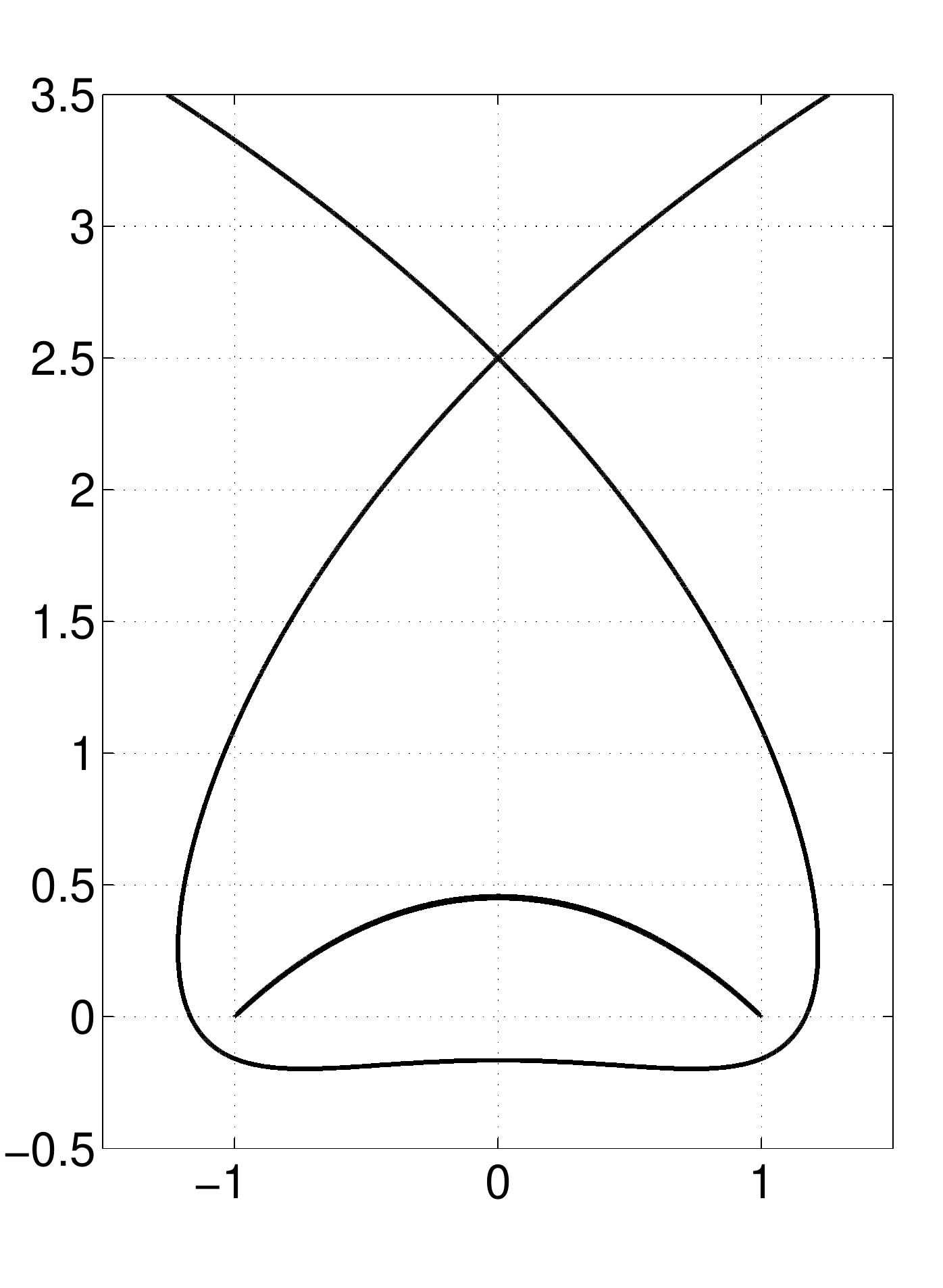}
\includegraphics[width=43mm,height=60mm]{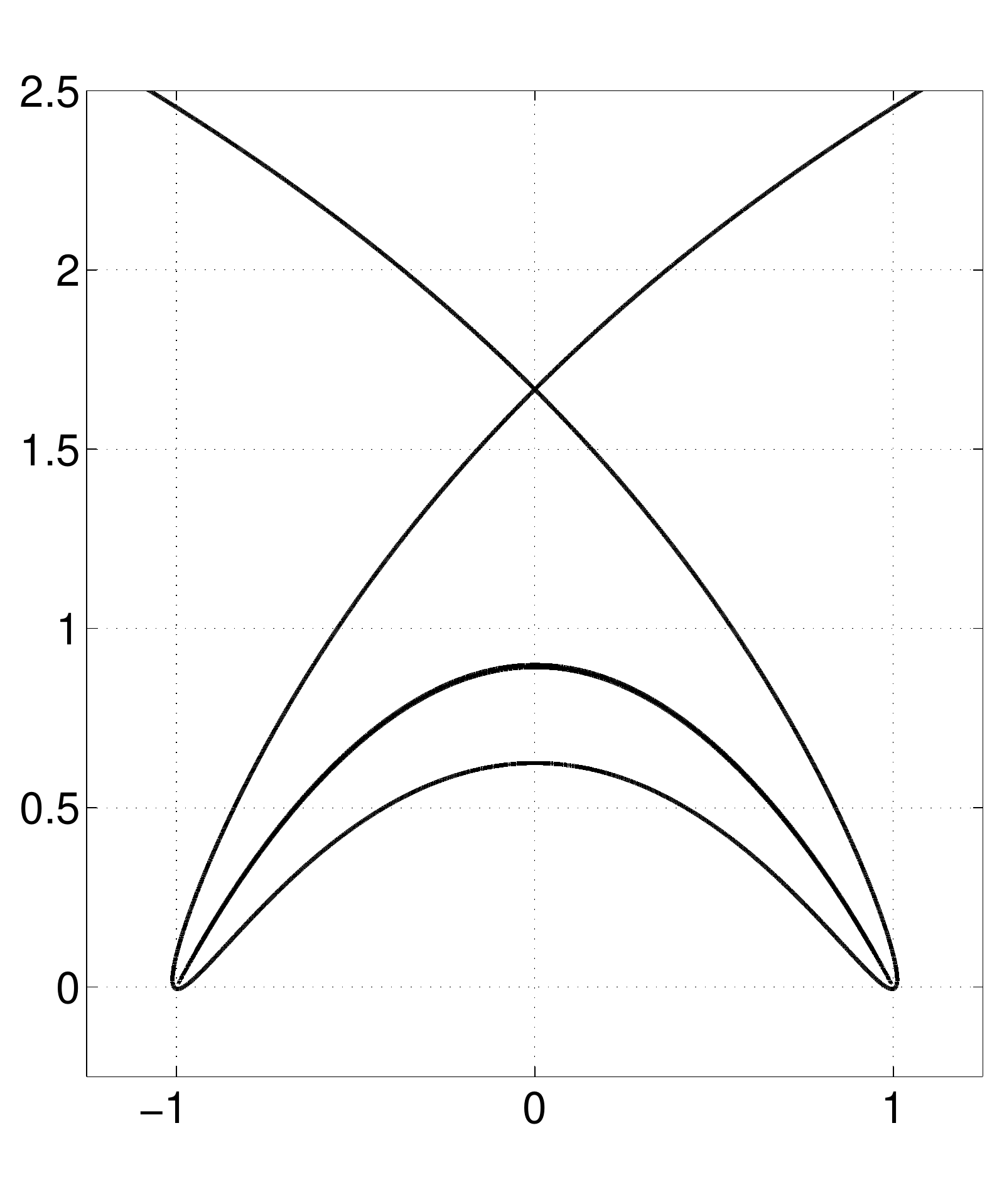}
}
\caption{Trajectories corresponding to the critical points $z=\pm 1$ (in black) and to the point $z=z^*$ (in red) for $\lambda=0.5$ (left), $\lambda=0.8$ (center) and $\lambda=1.2$ (right). The black trajectory on the top of the figure (right) is omitted in the other two plots for clarity, as it does not play any significant role in the analysis. }
\label{fig_trajectories}
\end{figure}

Figure \ref{fig_trajectories} illustrates the trajectories of the quadratic differential $-Q_{\lambda}(z)\dd z^2$ for different values of $\lambda<\lambda_0$.

\section{Proof of Theorem \ref{Th2}}\label{sec_Proof2}

In order to give the asymptotic behavior of the orthogonal polynomials $p^{\om}_n(z)$, we will use the Riemann--Hilbert approach, together with the Deift--Zhou nonlinear steepest descent method. We take the weight function $w_n(x)=\ee^{-n V(x)}$, with $V(x)=-\ii \lambda x$ and $0\leq\lambda<\lambda_0$,
where $\lambda_0$ is defined in \eqref{l0}.

\subsection{Riemann--Hilbert (RH) problem}\label{sec_RH}
 We seek $Y(z)=Y_{n,\omega}(z)\in\mathbb{C}^{2\times 2}$ such that
\begin{enumerate}
\item $Y(z)$ is analytic (entrywise) in $\mathbb{C}\setminus \gamma_{\lambda}$.
\item On $\gamma_{\lambda}$, we have the jump
\begin{equation}
 Y_+(x)=Y_-(x)\begin{pmatrix}1 & w_n(x)\\ 0 & 1\end{pmatrix}.
\end{equation}
\item As $z\to\infty$:
\begin{equation}
 Y(z)=\left(I+\mathcal{O}\left(\frac{1}{z}\right)\right)\begin{pmatrix} z^n & 0\\ 0 & z^{-n}\end{pmatrix}.
\end{equation}
\item As $z\to \pm 1$ we have
\begin{equation}
 Y(z)=\mathcal{O}\begin{pmatrix}
                   1 & \log|z\mp 1|\\
		   1 & \log|z\mp 1|
                  \end{pmatrix}.
\end{equation}
\end{enumerate}

It is known that if this RH problem has a solution, it is unique, see for instance \cite[\S 3.2 and \S 7.1]{Deift}. Moreover, in this case the solution is given by
\begin{equation}
 Y(z)=\begin{pmatrix}
       p^{\om}_n(z) & (\mathcal{C} p^{\om}_n w_n)(z)\\
       -2\pi\ii\kappa_{n-1}^2 p^{\om}_{n-1}(z) & -2\pi\ii \kappa_{n-1}^2(\mathcal{C} p^{\om}_{n-1}w_n)(z)
      \end{pmatrix},
\end{equation}
where
\begin{equation}
 (\mathcal{C} f)(z)=\frac{1}{2\pi\ii}\int_{\gamma_{\lambda}} \frac{f(s)}{s-z}\dd s
\end{equation}
is the Cauchy transform of the function $f(z)$, analytic in $\mathbb{C}\setminus \gamma_{\lambda}$, and $\kappa_{n-1}$ is the leading coefficient of the orthonormal polynomial, i.e. $\pi_{n-1}^{\omega}(x)=\kappa_{n-1}p_{n-1}^{\omega}(x)$.

The RH formulation of the orthogonal polynomials is due to Fokas, Its and Kitaev, \cite{FIK1}, see also the monograph of Deift, \cite[\S 3.2]{Deift}. The conditions at $z=\pm 1$ follow from a general result on the logarithmic behavior of the Cauchy transform, see \cite[\S 1.8.1]{Gak}.

The existence of $p^{\om}_n(z)$ and $p^{\om}_{n-1}(z)$ would be guaranteed for all $n$ if the weight function was positive, via the standard Gram--Schmidt orthogonalization procedure applied to the basis of monomials. In this case the weight function is not positive, so the existence of $p^{\om}_n(z)$ is not clear a priori, however the Deift--Zhou steepest descent analysis will provide a proof of existence for large enough $n$. This analysis consists of the following sequence of (explicit and invertible) transformations:
\begin{equation}
Y(z)\mapsto T(z) \mapsto S(z) \mapsto R(z).
\end{equation}

A global estimate of $R(z)$ for large $n$ will give the asymptotic behavior of $Y(z)$, and in particular of $p^{\om}_n(z)$ and $p^{\om}_{n-1}(z)$, and this will imply the existence of these polynomials, at least for large $n$.

\subsubsection{First transformation}
In order to normalize the RH problem at infinity, we need the $g$-function corresponding to the potential $V(z)$. This function is analytic in $\mathbb{C}\setminus((-\infty,-1]\cup\gamma_{\lambda})$, and on this contour it has the following jumps:
\begin{equation}
 g_+(x)-g_-(x)=\begin{cases}
                2\pi\ii\displaystyle \int_x^1 \psi_{\lambda}(s)\dd s,&\qquad x\in\gamma_{\lambda}\\
		2\pi\ii, \qquad x\in(-\infty,-1],
               \end{cases}
\end{equation}
where integration is taken along $\gamma_{\lambda}$, and $\psi_{\lambda}(s)$ is given by \eqref{psiz}. We consider the function
\begin{equation}\label{phi2}
 \phi(z)=2g(z)-V(z)-l,
\end{equation}
which is analytic in $\mathbb{C}\setminus((-\infty,-1]\cup\gamma_{\lambda})$ and satisfies 
\begin{equation}
\phi_+(x)=-\phi_-(x)=g_+(x)-g_-(x), \qquad x\in (-\infty,-1]\cup\gamma_{\lambda}.
\end{equation}

Using the analytic extension of $\psi(z)$, it is possible to write
\begin{equation}
 \phi(z)=2\pi\ii \int_z^1 \psi(s)\dd s,
\end{equation}
for $z\in\mathbb{C}\setminus((-\infty,-1]\cup\gamma_{\lambda})$. Integration is taken along the a smooth curve joining the points $z$ and $1$ in the complex plane, without crossing the cut $\gamma_{\lambda}$.

Direct evaluation of the integral gives $\phi(z)$ as in \eqref{phiz}:
\begin{equation}\label{phi2_explicit}
 \phi(z)=2\log\varphi(z)+\ii\lambda(z^2-1)^{1/2},
\end{equation}
again with the cut of the square root taken on $\gamma_{\lambda}$, and $\varphi(z)$ given by \eqref{varphi}.

Furthermore, we know from the integral representation that $g(z)=\log z+\ldots$ as $z\to\infty$, and
\begin{equation}
 g(z)=\frac{1}{2}\left(\phi(z)+V(z)+l\right)=\log 2+\log z+\frac{l}{2}+\mathcal{O}(1/z),
\end{equation}
using the explicit formulas for $\phi(z)$ and $V(z)$, so $l=-2\log 2$. The function $g(z)$ can be given explicitly as well:
\begin{equation}\label{gz}
  g(z)=\log\varphi(z)+\frac{\ii\lambda}{2}(z^2-1)^{1/2}-\frac{\ii\lambda z}{2}-\log 2,
\end{equation}
and it is analytic in $\mathbb{C}\setminus (-\infty,-1]\cup\gamma_{\lambda})$.

The first transformation reads
\begin{equation}\label{TY}
 T(z)=\ee^{-\frac{nl}{2}\sigma_3} Y(z)\ee^{-n\left[g(z)-\frac{l}{2}\right]\sigma_3}=2^{n\sigma_3} Y(z)\ee^{-n\left[g(z)-\frac{l}{2}\right]\sigma_3},
\end{equation}
where we have use the standard notation for the Pauli matrix $\sigma_3=\begin{pmatrix} 1&0\\ 0 & -1 \end{pmatrix}$.
Note that when $\lambda=0$, we get
\begin{equation}
 g(z)-\frac{l}{2}=g(z)+\log 2=\log\varphi(z),
\end{equation}
so
\begin{equation}
 \ee^{-n\left[g(z)-\frac{l}{2}\right]}=\varphi(z)^{-n},
\end{equation}
which coincides with the definition in \cite{KMcLVAV}.

Then the matrix $T(z)$ satisfies the following RH problem:
\begin{enumerate}
\item $T(z)$ is analytic (entrywise) in $\mathbb{C}\setminus\gamma_{\lambda}$
\item On $\gamma_{\lambda}$, we have the jump
\begin{equation}
 T_+(z)=T_-(z)
 \begin{pmatrix}
  \ee^{-n\phi_+(z)} & 1\\ 0 & \ee^{n\phi_+(z)}
 \end{pmatrix},
\qquad z\in\gamma_{\lambda}. 
 \end{equation}
\item As $z\to\infty$:
\begin{equation}
 T(z)=I+\mathcal{O}\left(\frac{1}{z}\right)
\end{equation}
\item As $z\to \pm 1$, $T(z)$ has the same behavior as $Y(z)$. 

\end{enumerate}

\subsubsection{Second transformation}
We note the following factorization of the jump matrix on $\gamma_{\lambda}$:
\begin{equation}
\begin{pmatrix}
  \ee^{-n\phi_+(z)} & 1\\ 0 & \ee^{n\phi_+(z)}
 \end{pmatrix}
=
\begin{pmatrix}
  1 & 0\\ \ee^{n\phi_+(z)} & 1
 \end{pmatrix}
\begin{pmatrix}
  0 & 1\\ -1 & 0
\end{pmatrix}
\begin{pmatrix}
  1 & 0\\ \ee^{-n\phi_+(z)} & 1
 \end{pmatrix}
\end{equation}

In the next transformation we open a lens around $\gamma_{\lambda}$, see Figure \ref{fig_lens}, and we consider the following matrix function:
\begin{equation}\label{ST}
S(z)=\begin{cases}
T(z), \qquad z\,\,\textrm{outside the lens}\\
T(z)\begin{pmatrix} 1& 0\\ -\ee^{-n\phi(z)} & 1\end{pmatrix}, \qquad z\,\,\textrm{in the upper part of the lens}\\
T(z)\begin{pmatrix} 1& 0\\ \ee^{-n\phi(z)} & 1\end{pmatrix}, \qquad z\,\,\textrm{in the lower part of the lens}.
\end{cases}
\end{equation}

\begin{figure}[h]
\centerline{\includegraphics[width=85mm,height=40mm]{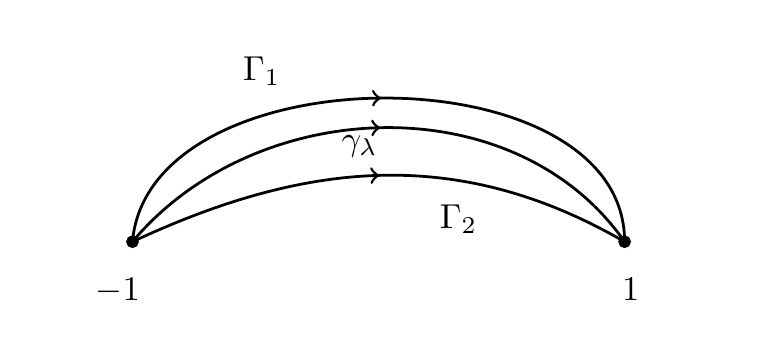}}
\caption{Lens--shaped contour $\Sigma_S$ around the curve $\gamma_{\lambda}$.}
\label{fig_lens}
\end{figure}

The matrix $S(z)$ satisfies the following RH problem:
\begin{enumerate}
\item $S(z)$ is analytic (entrywise) in $\mathbb{C}\setminus\Sigma_S$, where $\Sigma_S=\Gamma_1\cup\gamma_{\lambda}\cup\Gamma_2$, see Figure \ref{fig_lens}
\item On $\Sigma_S\setminus\{-1,1\}$ we have the following jumps:
\begin{equation}
 S_+(z)=S_-(z)
\begin{cases}
\begin{pmatrix}1 & 0\\ \ee^{-n\phi(z)} & 1\end{pmatrix}, \qquad z\in \Gamma_1\cup\Gamma_2,\\
\begin{pmatrix}0 & 1\\ -1 & 0\end{pmatrix}, \qquad z\in \gamma_{\lambda}. 
\end{cases}
\end{equation}
\item As $z\to\infty$:
\begin{equation}
 S(z)=I+\mathcal{O}\left(\frac{1}{z}\right).
\end{equation}
\item As $z\to 1$ we have
\begin{equation}
 S(z)=
       \begin{cases}
        \mathcal{O}\begin{pmatrix}
                    1 & \log|z-1|\\
 		   1 & \log|z-1|
                   \end{pmatrix},
       &z\,\,\textrm{outside the lens}\\
        \mathcal{O}\begin{pmatrix}
                   \log|z-1| & \log|z-1|\\
		   \log|z-1| & \log|z-1|
                  \end{pmatrix},
       &z\,\,\textrm{inside the lens}
       \end{cases}
\end{equation}
\item As $z\to -1$ we have the same behavior as before, replacing $|z-1|$ with $|z+1|$.
\end{enumerate}

Since the function $\phi(z)$ is purely imaginary on $\gamma_{\lambda}$ and $\textrm{Im}\,\phi(z)$ is decreasing along $\gamma_{\lambda}$, the Cauchy--Riemann equations imply that $\textrm{Re}\, \phi(z)$ increases locally in the complex plane as we move up in the complex plane from the curve $\gamma_{\lambda}$. Hence the jump matrix on $\Gamma_1$ tends to the identity exponentially fast with $n$. A similar reasoning applies to the jump on the curve $\Gamma_2$.

\subsubsection{Model RH problem}\label{modelRH}
Ignoring all jumps that are exponentially close to the identity as $n\to\infty$, we seek a matrix $N(z)$ that satisfies the following RH problem:
\begin{enumerate}
\item $N(z)$ is analytic in $\mathbb{C}\setminus \gamma_{\lambda}$.
\item On $\gamma_{\lambda}$, we have the following jump:
\begin{equation}
 N_+(x)=N_-(x)
\begin{pmatrix} 0& 1\\ -1 & 0\end{pmatrix}, 
\end{equation}
\item As $z\to\infty$:
\begin{equation}
 N(z)=I+\mathcal{O}\left(\frac{1}{z}\right).
\end{equation}
\end{enumerate}

This RH problem can be solved explicitly, and we have
\begin{equation}\label{Nz}
N(z)=\frac{1}{\sqrt{2}(z^2-1)^{1/4}}
\begin{pmatrix} 
\varphi(z)^{1/2} & \ii\varphi(z)^{-1/2}\\
-\ii \varphi(z)^{-1/2} & \varphi(z)^{-1/2}
\end{pmatrix},
\end{equation}
in terms of the function $\varphi(z)=z+(z^2-1)^{1/2}$, taken analytic in $\mathbb{C}\setminus\gamma_{\lambda}$ .

\subsubsection{Local parametrices}
We consider a disc $D(1,\delta)$ around $z=1$, with radius $\delta>0$ fixed. We seek a matrix $P$ that satisfies the following RH problem: 

\begin{enumerate}
\item $P(z)$ is analytic in $D(1,\delta)\setminus\Sigma_S$.
\item On $D(1,\delta)\cap \Sigma_S$, we have the following jumps:
\begin{equation}
 P_+(x)=P_-(x)
\begin{cases}
\begin{pmatrix}1 & 0\\ \ee^{-n\phi(z)} & 1\end{pmatrix}, \qquad z\in D(1,\delta)\cap(\Gamma_1\cup\Gamma_2),\\
\begin{pmatrix}0 & 1\\ -1 & 0\end{pmatrix}, \qquad z\in D(1,\delta)\cap \gamma_{\lambda}. 
\end{cases}
\end{equation}
\item Uniformly for $z\in\partial D(1,\delta)$, we have the matching
\begin{equation}\label{matchingPN}
 P(z)=N(z)\left(I+\mathcal{O}\left(\frac{1}{n}\right)\right), \qquad n\to\infty.
\end{equation}
\item As $z\to 1$ we have
\begin{equation}
 P(z)=\begin{cases}
        \mathcal{O}\begin{pmatrix}
                    1 & \log|z-1|\\
 		   1 & \log|z-1|
                   \end{pmatrix},
       &z\,\,\textrm{outside the lens}\\
        \mathcal{O}\begin{pmatrix}
                   \log|z-1| & \log|z-1|\\
		   \log|z-1| & \log|z-1|
                  \end{pmatrix},
       &z\,\,\textrm{inside the lens}
\end{cases}
\end{equation}
\end{enumerate}

We construct the local parametrix in the following way: define
\begin{equation} \label{Pz}
 P(z)=E_n(z)Q(z)\ee^{-\frac{n}{2}\phi(z)\sigma_3},
\end{equation}
where $E_n(z)$ is an analytic factor, to be determined later to get the matching condition with $N(z)$ on the boundary of the disc. Then $Q(z)$ satisfies the following RH problem:
\begin{enumerate}
\item $Q(z)$ is analytic in $D(1,\delta)\setminus\Sigma_S$.
\item On $D(1,\delta)\cap \Sigma_S$, we have the following jumps:
\begin{equation}
 Q_+(z)=Q_-(z)
\begin{cases}
\begin{pmatrix}1 & 0\\ 1 & 1\end{pmatrix}, \qquad z\in D(1,\delta)\cap(\Gamma_1\cup\Gamma_2),\\
\begin{pmatrix}0 & 1\\ -1 & 0\end{pmatrix}, \qquad z\in D(1,\delta)\cap \gamma_{\lambda}. 
\end{cases}
\end{equation}
\item As $z\to 1$ we have
\begin{equation}
 Q(z)=\begin{cases}
        \mathcal{O}\begin{pmatrix}
                    1 & \log|z-1|\\
 		   1 & \log|z-1|
                   \end{pmatrix},
       &z\,\,\textrm{outside the lens}\\
        \mathcal{O}\begin{pmatrix}
                   \log|z-1| & \log|z-1|\\
		   \log|z-1| & \log|z-1|
                  \end{pmatrix},
       &z\,\,\textrm{inside the lens}
\end{cases}
\end{equation}
\end{enumerate}

The matrix $Q(z)$ is constructed using Bessel functions, following the ideas in \cite[Section 6]{KMcLVAV}.
\begin{equation}
 Q(z)=\Psi(n^2 f(z)),%\ee^{n\phi(z)\sigma_3},
\end{equation}
where $\zeta=f(z)$ is a conformal mapping from a neighborhood of $z=1$ onto a neighborhood of the origin, and $\Psi(\z)$ solves the following auxiliary RH problem: in the $\z$ plane, consider the contour $\Sigma_{\Psi}$ as illustrated in Figure \ref{fig_mapping_Bessel}, then 
\begin{enumerate}
\item $\Psi(\z)$ is analytic in $\mathbb{C}\setminus\Sigma_{\Psi}$.
\item On $\Sigma_{\Psi}$, we have the following jumps:
\begin{equation}
 \Psi_+(\z)=\Psi_-(\z)
\begin{cases}
\begin{pmatrix}1 & 0\\ 1 & 1\end{pmatrix}, \qquad \z\in \Sigma_1\cup\Sigma_3,\\
\begin{pmatrix}0 & 1\\ -1 & 0\end{pmatrix}, \qquad \z\in\Sigma_2. 
\end{cases}
\end{equation}
\item As $\z\to 0$ we have the following behavior:
\begin{equation}
 \Psi(\z)=\begin{cases}
        \mathcal{O}\begin{pmatrix}
                    1 & \log|\z|\\
 		   1 & \log|\z|
                   \end{pmatrix},
       &|\arg\zeta|<\frac{2\pi}{3}\\
        \mathcal{O}\begin{pmatrix}
                   \log|\zeta| & \log|\zeta|\\
		   \log|\zeta| & \log|\zeta|
                  \end{pmatrix},
       &\textrm{elsewhere}
\end{cases}
\end{equation}
\end{enumerate}

\begin{figure}
 \centerline{\includegraphics{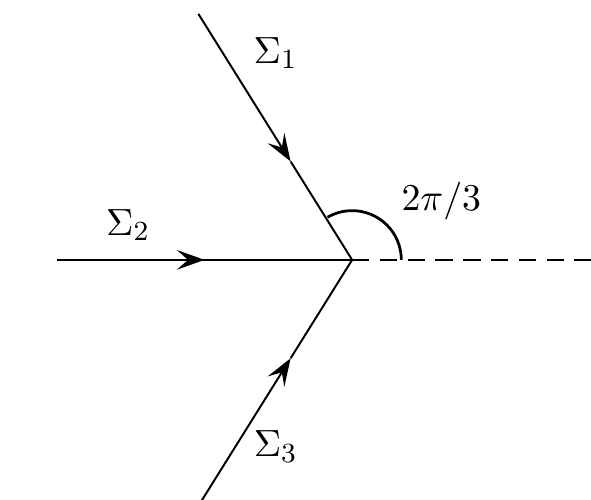}}
\caption{Contour $\Sigma_{\Psi}$ in the $\z$ plane}
\label{fig_mapping_Bessel}
\end{figure}

The solution of this RH problem is given in terms of modified and ordinary Bessel functions, see \cite[Section 6]{KMcLVAV}:
\begin{equation}
 \Psi(\z)=
 \begin{pmatrix}
  \frac{1}{2}H_0^{(2)}(2(-\z)^{1/2}) &-\frac{1}{2}H_0^{(1)}(2(-\z)^{1/2})\\[1mm]
  -\pi \z^{1/2} (H_0^{(2)})'(2(-\z)^{1/2}) & \pi \z^{1/2} (H_0^{(1)})'(2(-\z)^{1/2})
 \end{pmatrix}
\end{equation}
if $-\pi<\arg \z<-\frac{2\pi}{3}$,
\begin{equation}
 \Psi(\z)=
 \begin{pmatrix}
  I_0(2\z^{1/2}) &\frac{i}{\pi}K_0(2\z^{1/2})\\[1mm]
  2\pi \ii \z^{1/2} I_0'(2\z^{1/2}) & -2\z^{1/2} K_0'(2\z^{1/2})
 \end{pmatrix}
\end{equation}
if $|\arg \z|<\frac{2\pi}{3}$ and
\begin{equation}
 \Psi(\z)=
 \begin{pmatrix}
  \frac{1}{2}H_0^{(1)}(2(-\z)^{1/2}) & \frac{1}{2}H_0^{(2)}(2(-\z)^{1/2})\\[1mm]
  \pi  \z^{1/2} (H_0^{(1)})'(2(-\z)^{1/2}) & \pi \z^{1/2} (H_0^{(2)})'(2(-\z)^{1/2})
 \end{pmatrix}
\end{equation}
if $\frac{2\pi}{3}<\arg \z<\pi$. Consider the asymptotic behavior of this $\Psi(\z)$ function. If $|\arg \z|<2\pi/3$, we have
\begin{equation}
 \Psi(\z)=\frac{1}{\sqrt{2}}(2\pi\z^{1/2})^{-\sigma_3/2}
\begin{pmatrix}
 1 & \ii\\
 \ii & 1
\end{pmatrix}
\left(I+\mathcal{O}(\z^{-1/2})\right)
\ee^{2\z^{1/2}\sigma_3}, \qquad \z\to\infty,
\end{equation}
where the square root takes the principal value, with a cut on the negative real axis. The same result can be checked in the other two sectors, using known connection formulas for Bessel functions, and the behavior of $\Psi(\zeta)$ at the origin also follows from standard expansions of Hankel and Bessel functions.

It remains to determine the conformal mapping $f(z)$ and the analytic prefactor $E_n(z)$. First, we want to match the exponential factors in $Q(z)$ and $\Psi(z)$, so we set
\begin{equation}
 \z=n^2f(z)=\frac{n^2}{16}\phi(z)^2.
\end{equation}

Hence, $f(z)=\phi(z)^2/16$, and since
\begin{equation}
 \phi(z)=\sqrt{2}\left(2+\ii\lambda\right)(z-1)^{1/2}+\mathcal{O}((z-1)^{3/2}), \qquad z\to 1,
\end{equation}
we obtain
\begin{equation}
 f(z)=\frac{(2+\ii\lambda)^2}{8}(z-1)+\mathcal{O}((z-1)^2), \qquad z\to 1,
\end{equation}
so $f(z)$ is indeed a conformal mapping locally near $z=1$. Moreover,
\begin{equation}
 \Psi(n^2 f(z))=\frac{1}{\sqrt{2}}(2\pi n)^{-\sigma_3/2} f(z)^{-\sigma_3/4}
\begin{pmatrix}
 1 & \ii\\
 \ii & 1
\end{pmatrix}
\left(I+\mathcal{O}(n^{-1})\right)
\ee^{2nf(z)^{1/2}\sigma_3},
\end{equation}
so we define
\begin{equation}
 E_n(z)=N(z)\frac{1}{\sqrt{2}}\begin{pmatrix}
                                                              1 & -\ii\\
							       -\ii & 1
                                                             \end{pmatrix}
f(z)^{\sigma_3/4}(2\pi n)^{\sigma_3/2},
\end{equation}
and thus the matching of $P(z)$ and $N(z)$ required in \eqref{matchingPN} is achieved on the boundary of $D(1,\delta)$.

In a neighborhood of $z=-1$, say $D(-1,\delta)$, we have a similar construction: the local parametrix $\tilde{P}(z)$ is given by
\begin{equation}
 \tilde{P}(z)=\tilde{E}_n(z)\tilde{Q}(z)\ee^{-\frac{n}{2}\tilde{\phi}(z)\sigma_3},
\end{equation}
where 
\begin{equation}
\tilde{\phi}(z)=2\pi\ii\int_z^{-1} \psi(s)\dd s=\phi(z)-2\pi\ii
\end{equation}
and
\begin{equation}
 \tilde{Q}(z)=\tilde{\Psi}(n^2 \tilde{f}(z))
=\sigma_3\Psi(n^2\tilde{f}(z))\sigma_3,
\end{equation}
with $\tilde{f}(z)=\tilde{\phi}(z)^2/16$ a conformal mapping from a neighborhood of $z=-1$ onto a 
neighborhood of $\z=0$. The analytic factor in this case is
\begin{equation}
 \tilde{E}_n(z)=N(z)\frac{1}{\sqrt{2}}\begin{pmatrix}
                                      1 & \ii\\
				      \ii & 1
                                     \end{pmatrix}
\tilde{f}(z)^{\sigma_3/4}(2\pi n)^{\sigma_3/2}.
\end{equation}

\subsubsection{Final transformation}
Using the global and local parametrices, we define
\begin{equation}\label{RS}
 R(z)=S(z)
\begin{cases}
 N^{-1}(z), & \qquad z\in\mathbb{C}\setminus (D(1,\delta)\cup D(-1,\delta)\cup\Sigma_S),\\
P^{-1}(z), & \qquad z\in D(1,\delta)\setminus\Sigma_S,\\
\tilde{P}^{-1}(z), & \qquad z\in D(-1,\delta)\setminus\Sigma_S,
\end{cases}
\end{equation}

\begin{figure}
\centerline{\includegraphics[width=90mm,height=45mm]{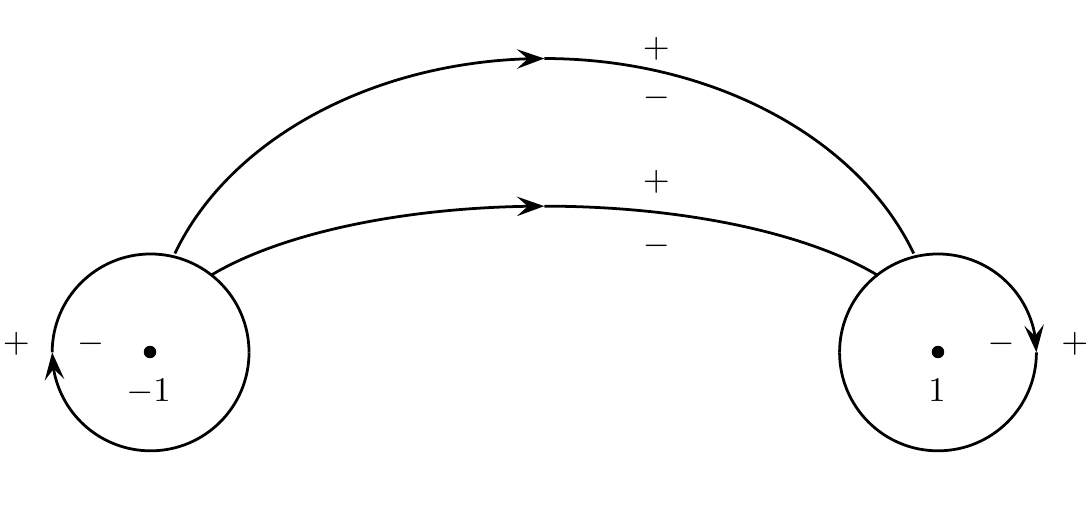}}
\caption{Final contour $\Sigma_R$ in the steepest descent analysis}
\label{fig_SigmaR}
\end{figure}

Following an argument similar to the one in \cite[\S 7]{KMcLVAV}, it can be shown that this matrix $R(z)$ is analytic off the contour $\Sigma_R$ shown in Figure \ref{fig_SigmaR}. Furthermore, as $n\to\infty$ the jumps are
\begin{equation}
 R_+(z)=R_-(z)
\begin{cases}
 I+\mathcal{O}(\ee^{-2cn}), \qquad z\in\Sigma_R\setminus(\partial D(1,\delta)\cup\partial D(-1,\delta)),\\
 I+\mathcal{O}(n^{-1}), \qquad z\in\partial D(1,\delta)\cup\partial D(-1,\delta).
\end{cases}
\end{equation}

From this result and the fact that $R(z)\to I$ as $z\to\infty$, it can be proved that
\begin{equation}
 R(z)=I+\mathcal{O}(n^{-1}), \qquad n\to\infty,
\end{equation}
uniformly for $z\in\mathbb{C}\setminus\Sigma_R$, see for example \cite[Section 11]{Ble}. Reversing the transformations, we can obtain the asymptotic behavior of $Y(z)$ as $n\to\infty$, and in particular of the $(1,1)$ entry, that contains the polynomial $p^{\om}_n(z)$. Actually, the matrix $R(z)$ admits a full asymptotic expansion in inverse powers of $n$:
\begin{equation}\label{asympRn}
 R(z)\sim I+\sum_{k=1}^{\infty}\frac{R^{(k)}(z)}{n^k}, \qquad n\to\infty,
\end{equation}
where the functions $R^{(k)}(z)$ are analytic in $z\in\mathbb{C}\setminus(\partial D(1,\delta)\cup\partial D(-1,\delta))$. Furthermore, it is true that
\begin{equation}
R^{(k)}(z)=\mathcal{O}\left(\frac{1}{z}\right), \qquad z\to\infty,
\end{equation}
and, as in \cite[Lemma 8.3]{KMcLVAV}, the expansion \eqref{asympRn} is uniformly valid for large values of $z$ as well. This double asymptotic property, for large $n$ and large $z$, will be used later to obtain the asymptotic expansion of the recurrence coefficients.

The outer asymptotics for $p_n^{\om}(z)$, for $z$ in compact subsets of $\mathbb{C}\setminus\gamma_{\lambda}$ (outside of the lens and away from the endpoints), can be worked out from the steepest descent analysis. We apply the first transformation \eqref{TY}, together with the fact that in the outer region $T(z)=S(z)=R(z)N(z)$, so
\begin{equation}
\begin{aligned}
 p^{\om}_n(z)=\begin{pmatrix} 1 & 0 \end{pmatrix} Y(z)\begin{pmatrix} 1 \\ 0 \end{pmatrix}
&=\ee^{ng(z)}\begin{pmatrix} 1 & 0 \end{pmatrix} R(z)N(z)
\begin{pmatrix} 1 \\ 0 \end{pmatrix}\\
&=\ee^{ng(z)}\left(R_{11}(z)N_{11}(z)+R_{12}(z)N_{21}(z)\right)
\end{aligned}
\end{equation}

Now we use the fact that $R_{11}(z)=1+\mathcal{O}(1/n)$ and $R_{12}(z)=\mathcal{O}(1/n)$, and the explicit form of $N_{11}(z)$, see \eqref{Nz}. Finally, note that
\[
\ee^{ng(z)}=\left(\frac{\varphi(z)}{2}\right)^n \exp\left(-\frac{\ii n\lambda}{2\varphi(z)}\right)
\]
in terms of the function $\varphi(z)=z+(z^2-1)^{1/2}$, and using the explicit formula for $g(z)$, see \eqref{gz} and $D(z)$. Thus we get the asymptotic expansion  \eqref{outer_pn}.

If $z$ is in the upper (lower) part of the lens, we get
\begin{equation}
\begin{aligned}
 p^{\om}_n(z)&=\begin{pmatrix} 1 & 0 \end{pmatrix} Y(z)\begin{pmatrix} 1 \\ 0 \end{pmatrix}\\
 & = \begin{pmatrix} \ee^{\frac{nl}{2}} & 0 \end{pmatrix} S(z)
 \begin{pmatrix} 1 & 0 \\ \pm \ee^{-n\phi(z)} & 1 \end{pmatrix} 
\begin{pmatrix} \ee^{n\left(g(z)-\frac{l}{2}\right)}\\ 0 \end{pmatrix}\\
 & = \begin{pmatrix} \ee^{\frac{nl}{2}} & 0 \end{pmatrix} R(z)N(z)
\begin{pmatrix} \ee^{n\left(g(z)-\frac{l}{2}\right)}\\ \pm\ee^{n\left(g(z)-\phi(z)-\frac{l}{2}\right)} \end{pmatrix}\\
 \end{aligned}
\end{equation}

Since 
$$
g(z)-\frac{l}{2}=\frac{V(z)}{2}-\frac{\phi(z)}{2},
$$
we obtain
$$
\begin{aligned}
 &p^{\om}_n(z) = \ee^{\frac{n}{2}(V(z)+l)}\begin{pmatrix} 1 & 0 \end{pmatrix} R(z)N(z)
\begin{pmatrix} \ee^{\frac{n}{2}\phi(z)}\\ \pm \ee^{-\frac{n}{2}\phi(z)}\end{pmatrix}\\
&=\ee^{\frac{n}{2}(V(z)+l)}\left(N_{11}(z)\ee^{\frac{n\phi(z)}{2}}
\pm N_{12}(z)\ee^{-\frac{n\phi(z)}{2}}+\mathcal{O}(1/n)\right),
 \end{aligned}
$$
bearing in mind again that $R_{11}(z)=1+\mathcal{O}(1/n)$ and $R_{12}(z)=\mathcal{O}(1/n)$. Observe that if we take boundary values on $\gamma_{\lambda}$, we can use the fact that $\phi_+(z)=-\phi_-(z)$, and also $N_{11+}(z)=-N_{12-}(z)$ and $N_{12+}(z)=N_{11-}(z)$, which come from the model Riemann--Hilbert problem. As a result, both boundary values coincide on the curve $\gamma_{\lambda}$. 

Now we combine this with the explicit form of $\phi(z)$, see \eqref{phi2_explicit}, and with the formulas for $N_{11}(z)$ and $N_{12}(z)$. Note also that on $\gamma_{\lambda}$ we can write $\varphi_+(z)=\ee^{\ii\arccos z}$, using the standard definition of the arccosine function, with a cut on $(-\infty,-1]\cup[1,\infty)$, see \cite[\S 4.23.22]{DLMF}. Thus, substituing the value of $l$ and simplifying, we obtain the asymptotic result \eqref{inner_pn} on a neighborhood of the curve $\gamma_{\lambda}$. 

For $z\in D(1,\delta)$ and in the upper part of the lens, we use the relation between $T(z)$ and $S(z)$ in \eqref{ST}, the connection between $S(z)$ and $R(z)$ in \eqref{RS}, and the the expression for $P(z)$ in \eqref{Pz} to write
\begin{equation}
\begin{aligned}
 p^{\om}_n(z)&=\ee^{\frac{nl}{2}}
\begin{pmatrix} 1 & 0 \end{pmatrix} R(z)P(z)\begin{pmatrix} \ee^{n\left(g(z)-\frac{l}{2}\right)}\\ \ee^{n\left(g(z)-\frac{l}{2}-\phi(z)\right)} \end{pmatrix}\\
 &=\ee^{\frac{n}{2}(V(z)+l)}\begin{pmatrix} 1 & 0 \end{pmatrix}\left(I+\mathcal{O}\left(\frac{1}{n}\right)\right) E_n(z)\Psi(n^2 f(z))
 \begin{pmatrix} 1\\ 1 \end{pmatrix}
 \end{aligned}
\end{equation}

Now we observe that
\begin{equation}
\begin{aligned}
E_n(z)&=\frac{1}{\sqrt{2}}N(z)\begin{pmatrix}
1& -\ii\\ -\ii & 1
\end{pmatrix}
f(z)^{\sigma_3/4} (2\pi n)^{\sigma_3/2}\\
&=\frac{1}{\sqrt{2}}
\begin{pmatrix}
\beta(z)^{-1} & -\ii\beta(z)\\
-\ii\beta(z)^{-1} & \beta(z)\\
\end{pmatrix}
f(z)^{\sigma_3/4} (2\pi n)^{\sigma_3/2},
\end{aligned}
\end{equation}
in terms of $\beta(z)=\left(\frac{z-1}{z+1}\right)^{1/4}$.

Also, bearing in mind that $f(z)=\phi^2(z)/16$, we have
\[
 \Psi(n^2 f(z)) \begin{pmatrix} 1 \\ 1 \end{pmatrix}=
 \begin{pmatrix} 
  J_0\left(2n(-f(z))^{1/2}\right) \\ \pi n f(z)^{1/2} (J_0)'\left(2n(-f(z))^{1/2}\right) 
 \end{pmatrix},
\]
using the standard connection between Hankel and Bessel functions, see for instance \cite[10.4.3]{DLMF}. As a consequence,
\[
 f(z)^{\sigma_3/4} (2\pi n)^{\sigma_3/2} \Psi(n^2 f(z)) \begin{pmatrix} 1 \\ 1 \end{pmatrix}=
 f(z)^{1/4} (2\pi n)^{1/2}
 \begin{pmatrix} 
  J_0\left(2n(-f(z))^{1/2}\right) \\ (J_0)'\left(2n(-f(z))^{1/2}\right) 
 \end{pmatrix},
\]
and putting all together we get 
\[
\begin{aligned}
p^{\om}_n(z)&=\ee^{\frac{n}{2}(V(z)+l)}f(z)^{1/4} (\pi n)^{1/2}
\begin{pmatrix} 1 & 0 \end{pmatrix}\left(I+\mathcal{O}\left(\frac{1}{n}\right)\right)\\
&\times
\begin{pmatrix}
\beta(z)^{-1} & -\ii\beta(z)\\
-\ii\beta(z)^{-1} & \beta(z)
\end{pmatrix}
\begin{pmatrix} 
  J_0\left(2n(-f(z))^{1/2}\right) \\ (J_0)'\left(2n(-f(z))^{1/2}\right) 
 \end{pmatrix} 
\end{aligned}
\]

Finally, the argument of the Bessel function is equal to $-\frac{\ii n}{2}\phi(z)$, using the relation between $f(z)$ and $\phi(z)$. The sign of the square root is determined by the fact that in that sector of the lens we have
$\frac{2\pi}{3}<\arg\phi(z)<\pi$. Substituting $l=-2\log 2$, we obtain the asymptotic expansion \eqref{right_pn}. A similar computation can be carried out in the disc around $z=-1$, but we omit it for brevity.

It is clear that using the properties of $R(z)$, in particular its asymptotic expansion for large $n$, it is possible to compute higher order terms in the asymptotic expansion for the orthogonal polynomials if one can obtain the terms $R^{(k)}(z)$ in \eqref{asympRn}. Thus, substitution of a more refined asymptotic estimate for $R(z)$ in the previous formulas will lead to more accurate estimates. We omit this in the present paper for brevity, but we will present a possible approach in the next section, taken from \cite{KMcLVAV} and that is exploited in \cite{DHO}.

\section{Proof of Theorem \ref{Th3}}\label{sec_Proof3}

The recurrence coefficients of the three term recursion can be written in terms of the matrices in the Riemann--Hilbert analysis:
\begin{equation}
\begin{aligned}
a^2_n&=[Y_1]_{12}[Y_1]_{21},\\ %=[T_1]_{12}[T_1]_{21},\\
b_n &=\frac{[Y_2]_{12}}{[Y_1]_{12}}-[Y_1]_{22},%=\frac{[T_2]_{12}}{[T_1]_{12}}-[T_1]_{22},
\end{aligned}
\end{equation}
where
\begin{equation}
Y(z) z^{-n\sigma_3}=I+\frac{Y_1}{z}+\frac{Y_2}{z^2}+\ldots,\qquad z\to\infty.
%T(z)=I+\frac{T_1}{z}+\frac{T_2}{z^2}+\ldots, \qquad z\to\infty.
\end{equation}

Hence, the solvability of the Riemann--Hilbert problem for large $n$ proves the existence of the recurrence coefficients for large $n$. In order to obtain the asymptotic estimates, we want to write the coefficients in terms of $R(z)$, so we denote
\begin{equation}
T(z)=I+\frac{T_1}{z}+\frac{T_2}{z^2}+\ldots, \qquad z\to\infty.
\end{equation}

We use the fact that
\begin{equation}
 g(z)=\log z-\frac{c_1}{z}-\frac{c_2}{z^2}+\mathcal{O}(z^{-3}), \qquad z\to\infty,
\end{equation}
where the constants $c_1$ and $c_2$ are given by
\begin{equation}
c_1=\int_{\gamma_{\lambda}} s\,\dd\mu_{\lambda}(s), \qquad
c_2=\int_{\gamma_{\lambda}} s^2\,\dd\mu_{\lambda}(s).
\end{equation}

These constants can be computed explicitly in this case, using residue calculus, but this is not really needed. Consequently,
\begin{equation}
 \ee^{-ng(z)}=z^{-n}\left(1+\frac{nc_1}{z}+\frac{2n^2c_1^2+nc_2}{2z^2}+\mathcal{O}(z^{-3})\right), \qquad z\to\infty,
\end{equation}

 Next, because of the relation between $Y(z)$ and $T(z)$, see \eqref{TY}, and expanding for large $z$, we have
\begin{equation}
\begin{aligned}
T(z)=\ee^{-\frac{nl}{2}\sigma_3}&\left(I+\frac{Y_1}{z}+\frac{Y_2}{z^2}+\ldots\right)\times\\
&\left(I+\frac{nc_1\sigma_3}{z}+
\frac{(2nc_1^2+c_2)n\sigma_3}{2z^2}+\ldots\right)\ee^{\frac{nl}{2}\sigma_3}, 
\end{aligned}
\end{equation}
so
\begin{equation}
\begin{aligned}
T_1&=\ee^{-\frac{nl}{2}\sigma_3}Y_1\ee^{\frac{nl}{2}\sigma_3}+c_1n\sigma_3,\\
%\ee^{\frac{n\ell}{2}\sigma_3}(Y_1+c_1n\sigma_3)\ee^{-\frac{n\ell}{2}\sigma_3},\\
T_2&=\ee^{-\frac{nl}{2}\sigma_3}Y_2\ee^{\frac{nl}{2}\sigma_3}+c_1 n\ee^{-\frac{nl}{2}\sigma_3}Y_1\ee^{\frac{nl}{2}\sigma_3}+
\frac{(2nc_1^2+c_2)n\sigma_3}{2}
\end{aligned}
\end{equation}

It follows that 
\begin{equation}
[T_1]_{12}=\ee^{-nl}[Y_1]_{12}, \qquad
[T_1]_{21}=\ee^{nl}[Y_1]_{21}, \qquad
[Y_1]_{22}=[T_1]_{22}-c_1n,
\end{equation}
and also
\begin{equation}
 [T_2]_{12}=\ee^{-nl}\left(-c_1n[Y_1]_{12}+[Y_2]_{12}\right).
\end{equation}

As a consequence, we have in terms of $T(z)$:
\begin{equation}
\begin{aligned}
a^2_n&=[T_1]_{12}[T_1]_{21},\\
b_n &=\frac{[T_2]_{12}}{[T_1]_{12}}-[T_1]_{22},
\end{aligned}
\end{equation}

Next, away from the curve $\gamma_{\lambda}$, we have $T(z)=S(z)=R(z)N(z)$, and we define
\begin{equation}
N(z)=I+\frac{N_1}{z}+\frac{N_2}{z^2}+\ldots, \qquad
R(z)=I+\frac{R_1}{z}+\frac{R_2}{z^2}+\ldots, 
\end{equation}
as $z\to\infty$. Then,
\begin{equation}
\begin{aligned}
 T_1&=R_1+N_1,\\
 T_2&=N_2+R_1 N_1+R_2.
\end{aligned}
\end{equation}

From \eqref{Nz}, we have
\begin{equation}
N(z)=I-\frac{1}{2z}\sigma_2+\frac{1}{8z^2}I+\ldots, \qquad
\sigma_2=\begin{pmatrix} 0 & -\ii\\ \ii & 0 \end{pmatrix},
\end{equation}
so
\begin{equation}
\begin{aligned}
 T_1&=R_1-\frac{1}{2}\sigma_2,\\
 T_2&=\frac{1}{8}I-\frac{1}{2}R_1 \sigma_2+R_2,
\end{aligned}
\end{equation}
and then
\begin{equation}\label{anbnR}
\begin{aligned}
a^2_n&=\left([R_1]_{12}+\frac{\ii}{2}\right)\left([R_1]_{21}-\frac{\ii}{2}\right),\\
b_n &=\frac{\ii[R_1]_{11}+2[R_2]_{12}}{\ii+2[R_1]_{12}}-[R_1]_{22}.
\end{aligned}
\end{equation}

In order to compute the terms $R_1$ and $R_2$, we need to obtain additional terms in the large $n$ asymptotics of $R(z)$ first, that is, the coefficients $R^{(k)}(z)$ in \eqref{asympRn}. Following \cite[\S 8]{KMcLVAV}, we write the jump matrix for $R(z)$ on $\Sigma_R$ as a perturbation of the identity, so
\begin{equation}
R_+(z)=R_-(z)\left(I+\Delta(z)\right), \qquad z\in\Sigma_R,
\end{equation}
where $\Delta(z)$ admits an expansion in inverse powers of $n$:
\begin{equation}
\Delta(z)\sim\sum_{k=1}\frac{\Delta_k(z)}{n^k}, \qquad n\to\infty.
\end{equation}

Since the jump is exponentially close to the identity on $\Sigma_R\setminus(\partial D(1,\delta)\cup \partial D(-1,\delta))$, the coefficients $\Delta_k(z)$ are identically $0$ there. For $z\in\partial D(1,\delta)\cup \partial D(-1,\delta)$, they can be computed from the asymptotic expansion of the Bessel functions in the local parametrices. The outcome is the following:
\begin{equation}\label{Deltak1}
\begin{aligned}
\Delta_k(z)&=\frac{(-1)^{k-1}}{4^{k-1} (k-1)! \phi(z)^k} \prod_{j=1}^{k-1} (2j-1)^2\times\\
&N(z)
\begin{pmatrix}
\frac{(-1)^k}{k}\left(\frac{k}{2}-\frac{1}{4}\right) & -\left(k-\frac{1}{2}\right)\ii\\[2mm]
(-1)^k \left(k-\frac{1}{2}\right)\ii  & \frac{1}{k}\left(\frac{k}{2}-\frac{1}{4}\right) & 
\end{pmatrix}
N^{-1}(z), \qquad z\in\partial D(1,\delta)
\end{aligned}
\end{equation}
and
\begin{equation}\label{Deltakm1}
\begin{aligned}
\Delta_k(z)&=\frac{(-1)^{k-1}}{4^{k-1} (k-1)! \tilde{\phi}(z)^k} \prod_{j=1}^{k-1} (2j-1)^2\times\\
&N(z)
\begin{pmatrix}
\frac{(-1)^k}{k}\left(\frac{k}{2}-\frac{1}{4}\right) & \left(k-\frac{1}{2}\right)\ii\\[2mm]
(-1)^{k+1} \left(k-\frac{1}{2}\right)\ii  & \frac{1}{k}\left(\frac{k}{2}-\frac{1}{4}\right) & 
\end{pmatrix}
N^{-1}(z), \qquad z\in\partial D(-1,\delta),
\end{aligned}
\end{equation}
for $k\geq 1$, cf. \cite[formulas (8.5) and (8.6)]{KMcLVAV}. 

Additionally, as proved in \cite[Lemma 8.2]{KMcLVAV}, these functions $\Delta_k$ have analytic continuations to bigger discs around $z=\pm 1$ as meromorphic functions with poles at $z=\pm 1$ of order at most $[(k+1)/2]$. 

The importance of the functions $\Delta_k(z)$ is that they appear in an additive Riemann--Hilbert problem for $R_k(z)$ on the boundary of the discs. Namely, we have
\begin{equation}\label{RHforRk}
R^{(k)}_+(z)=R^{(k)}_-(z)+\sum_{j=1}^k R^{(k-j)}_-(z)\Delta_j(z), \qquad z\in\partial D(1,\delta)\cup \partial D(-1,\delta),
\end{equation}
where $+$ indicates the boundary value from the exterior and $-$ from the interior, recall Figure \ref{fig_SigmaR}. 

For $k=1$, \eqref{RHforRk} gives
\begin{equation}
R^{(1)}_+(z)=R^{(1)}_-(z)+\Delta_1(z), \qquad z\in\partial D(1,\delta)\cup \partial D(-1,\delta).
\end{equation}

Furthermore, we can write 
\begin{equation}
\begin{aligned}
 \Delta_1(z)&=\frac{A^{(1)}}{z-1}+\mathcal{O}(1), \qquad z\to 1,\\
 \Delta_1(z)&=\frac{B^{(1)}}{z+1}+\mathcal{O}(1), \qquad z\to -1,
\end{aligned}
\end{equation}
where $A^{(1)}$ are $B^{(1)}$ are constant matrices. From \eqref{Deltak1} and \eqref{Deltakm1}, and the behavior of $N(z)$, $\phi(z)$ and $\tilde{\phi}(z)$, we get
\begin{equation}
 A^{(1)}=-\frac{1}{8(2+\ii\lambda)}
	 \begin{pmatrix}
         -1 & \ii\\ \ii & 1 
         \end{pmatrix},
\qquad
 B^{(1)}=-\frac{1}{8(2-\ii\lambda)}
	 \begin{pmatrix}
         1 & \ii \\ \ii & -1 
         \end{pmatrix}.
\end{equation}

The Riemann--Hilbert problem for $R^{(1)}(z)$ can be solved as follows:
\begin{equation}
 R^{(1)}(z)=
\begin{cases}
 \displaystyle \frac{A^{(1)}}{z-1}+\frac{B^{(1)}}{z+1}, & z\in\mathbb{C}\setminus(\overline{D(1,\delta)}\cup \overline{D(-1,\delta)}),\\[2mm]
 \displaystyle \frac{A^{(1)}}{z-1}+\frac{B^{(1)}}{z+1}-\Delta_1(z), & z\in \overline{D(1,\delta)}\cup \overline{D(-1,\delta)}.
\end{cases}
\end{equation}

Hence, we have
\begin{equation}
 R^{(1)}(z)=-\frac{1}{8(2+\ii\lambda)}
	 \begin{pmatrix}
         -1 & \ii\\ \ii & 1 
         \end{pmatrix}\frac{1}{z-1}
-\frac{1}{8(2-\ii\lambda)}
	 \begin{pmatrix}
         1 & \ii \\ \ii & -1 
         \end{pmatrix}\frac{1}{z+1}, 
\end{equation}
for $z\in\mathbb{C}\setminus(\partial D(1,\delta)\cup \partial D(-1,\delta))$. Expanding for large $z$, we obtain
\begin{equation}
 R^{(1)}(z)=-\frac{\ii}{4(4+\lambda^2)}
	 \begin{pmatrix}
         \lambda & 2\\ 2 & -\lambda 
         \end{pmatrix}\frac{1}{z}
+\frac{1}{4(4+\lambda^2)}
	 \begin{pmatrix}
         2 & -\lambda \\ -\lambda & -2 
         \end{pmatrix}\frac{1}{z^2}+\mathcal{O}\left(\frac{1}{z^3}\right). 
\end{equation}

For $k=2$, equation \eqref{RHforRk} reads
\begin{equation}
 R^{(2)}_+(z)=R^{(2)}_-(z)+R^{(1)}_-(z)\Delta_1(z)+\Delta_2(z), \qquad z\in\partial D(1,\delta)\cup \partial D(-1,\delta).
\end{equation}

Now we have
\begin{equation}
\begin{aligned}
 R^{(1)}_-(z)\Delta_1(z)+\Delta_2(z)&=\frac{A^{(2)}}{z-1}+\mathcal{O}(1), \qquad z\to 1,\\
 R^{(1)}_-(z)\Delta_1(z)+\Delta_2(z)&=\frac{B^{(2)}}{z+1}+\mathcal{O}(1), \qquad z\to -1,
\end{aligned}
\end{equation}
where $A^{(2)}$ are $B^{(2)}$ are constant matrices. From \eqref{Deltak1} and \eqref{Deltakm1}, and the behavior of $N(z)$, $\phi(z)$ and $\tilde{\phi}(z)$, we get after collecting all relevant terms,
\begin{equation}
\begin{aligned}
 A^{(2)}&=-\frac{1}{64(\lambda-2\ii)^2(\lambda+2\ii)}
	 \begin{pmatrix}
         \lambda-2\ii & 4(2\ii\lambda-5)\\ -4(2\ii\lambda-5) &  \lambda-2\ii 
         \end{pmatrix},\\
 B^{(2)}&=\frac{1}{64(\lambda+2\ii)^2(\lambda-2\ii)}
	 \begin{pmatrix}
         \lambda+2\ii & -4(2\ii\lambda+5) \\ 4(2\ii\lambda+5) & \lambda+2\ii
         \end{pmatrix}.
\end{aligned}
\end{equation}

The Riemann--Hilbert problem for $R^{(2)}(z)$ can be solved as follows:
\begin{equation}
 R^{(2)}(z)=
\begin{cases}
 \displaystyle \frac{A^{(2)}}{z-1}+\frac{B^{(2)}}{z+1}, \quad z\in\mathbb{C}\setminus(\overline{D(1,\delta)}\cup \overline{D(-1,\delta)}),\\[2mm]
 \displaystyle \frac{A^{(2)}}{z-1}+\frac{B^{(2)}}{z+1}-R^{(1)}(z)\Delta_1(z)-\Delta_2(z), \quad z\in \overline{D(1,\delta)}\cup \overline{D(-1,\delta)}.
\end{cases}
\end{equation}

Reexpanding at infinity, we get
\begin{equation}
\begin{aligned}
 R^{(2)}(z)&=\frac{1}{4(4+\lambda^2)^2}
	 \begin{pmatrix}
         0 & -\ii(\lambda^2-5)\\ \ii(\lambda^2-5) & 0
         \end{pmatrix}\frac{1}{z}\\
&-\frac{1}{32(4+\lambda^2)^2}
	 \begin{pmatrix}
         \lambda^2+4 & -36\lambda \\ 36\lambda & \lambda^2+4 
         \end{pmatrix}\frac{1}{z^2}+\mathcal{O}\left(\frac{1}{z^3}\right). 
\end{aligned}
\end{equation}

If we now pick the contributions from $R^{(1)}(z)$ and $R^{(2)}(z)$ in terms of $n$, we substitute in \eqref{anbnR} and re-expand as $n\to\infty$, we obtain
\begin{equation}
 \begin{aligned}
  a_n^2&=\frac{1}{4}+\frac{4-\lambda^2}{4(4+\lambda^2)^2 n^2}+\mathcal{O}\left(\frac{1}{n^3}\right),\\
  b_n&=-\frac{2\ii\lambda}{(4+\lambda^2)^2 n^2}+\mathcal{O}\left(\frac{1}{n^3}\right).
 \end{aligned}
\end{equation}

This proves the leading terms in Theorem \ref{Th3}, and higher order coefficients can be computed if the functions $R^{(k)}(z)$ are available for $k\geq 3$. This can be accomplished in general following the idea just presented and using \eqref{RHforRk}, see also  \cite{KMcLVAV}, although the computation becomes increasingly complicated. A general algorithm to carry out this kind of computations for Jacobi--type weights will be available in \cite{DHO}, after the theory presented in \cite{KMcLVAV}. 

\appendix
\begin{section}{Steepest descent analysis for fixed $\omega$}
 For completeness, we address here briefly the case of large $n$ asymptotics with $\omega>0$ fixed, instead of coupled with $n$. In that case $\lambda=0$ and $V(z)=0$, but $w_n(z)=\ee^{-nV_n(z)}$, with potential $V_n(z)=-\frac{\ii\omega x}{n}=-\ii\lambda_n x$, that depends on $n$.

In this scenario, there are important differences in the steepest analysis. In the equilibrium problem we consider the limit potential $V(z)$ and the curve $\gamma_0=[-1,1]$, so $\phi(z)=2\log\varphi(z)$, with $\varphi(z)$ given by \eqref{varphi}. The jump of the matrix $T(z)$ on $[-1,1]$ is different from the case of coupled parameters, namely
\begin{equation}
 T_+(z)=T_-(z)
 \begin{pmatrix}
  \varphi_+(z)^{-2n} & W(z)\\ 0 & \varphi_+(z)^{2n}
 \end{pmatrix},
\end{equation}
using the variational equations \eqref{variational2}. Here we have
\begin{equation}\label{W}
  W(z)=\ee^{n(V(z)-V_n(z))}=\ee^{-n\ii z(\lambda-\lambda_n)}=\ee^{\ii\omega z}.
\end{equation}

In this situation, the factorization of the jump matrix would be
\begin{equation}
\begin{pmatrix}
  \varphi_+(z)^{-2n} & W(z)\\ 0 & \varphi_+(z)^{2n}
 \end{pmatrix}
=
\begin{pmatrix}
  1 & 0\\ \frac{\varphi_+(z)^{2n}}{W(z)} & 1
 \end{pmatrix}
\begin{pmatrix}
  0 & W(z)\\ -\frac{1}{W(z)} & 0
\end{pmatrix}
\begin{pmatrix}
  1 & 0\\ \frac{\varphi_+(z)^{-2n}}{W(z)} & 1
 \end{pmatrix}.
\end{equation}

We open a lens around $[-1,1]$ and we define:
\begin{equation}
S(z)=\begin{cases}
T(z), \qquad z\,\,\textrm{outside the lens}\\
T(z)\begin{pmatrix} 1& 0\\ -\frac{\varphi(z)^{-2n}}{W(z)} & 1\end{pmatrix}, \qquad z\,\,\textrm{in the upper part of the lens}\\
T(z)\begin{pmatrix} 1& 0\\ \frac{\varphi(z)^{-2n}}{W(z)} & 1\end{pmatrix}, \qquad z\,\,\textrm{in the lower part of the lens}.
\end{cases}
\end{equation}

Then the jumps of the matrix $S(z)$ are as follows:
\begin{equation}
 S_+(z)=S_-(z)
\begin{cases}
\begin{pmatrix}1 & 0\\ \frac{\varphi(z)^{-2n}}{W(z)} & 1\end{pmatrix}, \qquad z\in \Gamma_1\cup\Gamma_2,\\
\begin{pmatrix}0 & W(z)\\ -\frac{1}{W(z)} & 0\end{pmatrix}, \qquad z\in (-1,1). 
\end{cases}
\end{equation}

The decay of the off--diagonal element in the jump on $\Gamma_1\cup\Gamma_2$ as $n\to\infty$ is again a consequence of the properties of $\varphi(z)$, since $W(z)$ does not depend on $n$. 

When considering the model RH problem on $[-1,1]$, we seek a matrix $M(z)$ such that
\begin{enumerate}
\item $M(z)$ is analytic in $\mathbb{C}\setminus [-1,1]$.
\item On $[-1,1]$, we have the following jump:
\begin{equation}
 M_+(x)=M_-(x)
\begin{pmatrix} 0& W(z)\\ -\frac{1}{W(z)} & 0\end{pmatrix}.
\end{equation}
\item As $z\to\infty$:
\begin{equation}
 M(z)=I+\mathcal{O}\left(\frac{1}{z}\right).
\end{equation}
\end{enumerate}

It is known that the solution of this RH problem is constructed with the Szeg\H{o} function 
corresponding to the weight $W(z)$:
\begin{equation}
 D(z)=\exp\left(\frac{(z^2-1)^{1/2}}{2\pi}\int_{-1}^1\frac{\log W(s)}{\sqrt{1-s^2}}\frac{\dd s}{z-s}\right),
\end{equation}
which is analytic in $\mathbb{C}\setminus[-1,1]$ and verifies $D_+(z)D_-(z)=W(z)$ on
$(-1,1)$. Using residue calculation, in this case we can write explicitly
\begin{equation}\label{Dz_explicit}
 D(z)=\exp\left(\frac{\ii \omega}{2\varphi(z)}\right),
\end{equation}
taking the cut of the function $\varphi(z)$ on $[-1,1]$. If we further define
\begin{equation}\label{Dinf}
 D_{\infty}:=\lim_{z\to\infty} D(z)=1,
\end{equation}
 then the matrix 
\begin{equation}\label{Mz}
 M(z)=D_{\infty}^{\sigma_3}N(z)D(z)^{-\sigma_3}=N(z)D(z)^{-\sigma_3}
\end{equation}
solves the previous RH problem, and $N(z)$ is the global parametrix used in Section \ref{modelRH}. The final asymptotic results for the orthogonal polynomials can be worked out by replacing that global parametrix with $M(z)$ and making the appropriate changes in the local parametrices.

In this case the modification of the analysis is minor, since the quantity $n(\lambda-\lambda_n)=\omega$, that appears in the function $W(z)$, see \eqref{W}, and then in all subsequent computations, remains bounded independently of $n$. Similarly, one can deal with situations like $\lambda_n=\lambda+\mathcal{O}(n^{\alpha})$, with $\alpha\leq -1$. If this is not the case, for instance if $\omega=\sqrt{n}$, then the computations may become much more complicated, and the control of the error terms in the final expansion will probably involve the precise rate at which $\lambda_n$ approaches $\lambda$ as $n$ grows.
\end{section}

\section*{Acknowledgements}
The author gratefully acknowledges financial support from projects FWO G.0617.10 and FWO G.0641.11, funded by FWO (Fonds Wetenschappelijk Onderzoek, Research Fund Flanders, Belgium), and projects MTM2012--34787 and MTM2012-36732--C03--01, from the Spanish Ministry of Economy and Competitivity. The author also thanks Daan Huybrechs and Pablo Rom\'an for many stimulating and useful discussions on the topic and scope of this paper, and the two anonymous referees for comments, corrections and suggestions to improve its presentation.

\end{document}